\newcommand{\revision}[1]{\textcolor{black}{#1}}
\renewcommand{\baselinestretch}{1.5} 
\renewcommand{\epsilon}{\varepsilon}
\renewcommand{\phi}{\varphi}
\renewcommand{\H}{\mathcal{H}}
\newcommand{\Z}{\mathbb{Z}}
\newcommand{\R}{\mathbb{R}}
\newcommand{\C}{\mathbb{C}}
\newcommand{\N}{\mathbb{N}}
\newcommand*\pFq[6][8]{%
  \begingroup 
  \pFqmuskip=#1mu\relax
  \mathcode`\,=\string"8000
  \begingroup\lccode`\~=`\,
  \lowercase{\endgroup\let~}\pFqcomma
  {}_{#2}F_{#3}{\left[\genfrac..{0pt}{}{#4}{#5};#6\right]}%
  \endgroup
}
\newcommand{\pFqcomma}{\mskip\pFqmuskip}
\crefname{hypothesis}{Hypothesis}{Hypotheses}
\title{Explicit monotone stable super-time-stepping methods for finite time singularities}
\author{Zheng Tan\thanks{Department of Mathematics, UCLA, Los Angeles, CA 90095 (\email{zhengtan@math.ucla.edu}, \email{bertozzi@ucla.edu}.)}
\and Tariq D. Aslam\thanks{Los Alamos National Laboratory, Los Alamos, New Mexico 87545
  (\email{aslam@lanl.gov}).
  TDA was supported by the Advanced Simulation and Computing Program (ASC) at Los Alamos National Laboratory. Los Alamos National Laboratory is operated by Triad National Security, LLC, for the National Nuclear Security Administration of U.S Department of Energy (Contract No. 89233218CNA000001).}
\and Andrea L. Bertozzi*\thanks{ALB and ZT are supported by ONR grant DOD N00014-23-1-2565 and NSF grant DMS-2152717.}}
\newcommand*{\addFileDependency}[1]{
  \typeout{(#1)}
  \@addtofilelist{#1}
  \IfFileExists{#1}{}{\typeout{No file #1.}}
}
\newcommand*{\myexternaldocument}[1]{%
    \externaldocument{#1}%
    \addFileDependency{#1.tex}%
    \addFileDependency{#1.aux}%
}
\begin{document}

\renewcommand{\baselinestretch}{1}\normalsize  

\maketitle

\begin{abstract}
We explore a novel way to numerically resolve the scaling behavior of finite-time singularities in solutions of nonlinear parabolic PDEs.  The Runge--Kutta--Legendre (RKL) and Runge--Kutta--Gegenbauer (RKG) super-time-stepping methods were originally developed for nonlinear complex physics problems with diffusion.  These are multi-stage single step second-order, forward-in-time methods with no implicit solves.  The advantage is that the timestep size for stability scales with stage number $s$ as $\mathcal{O}(s^2)$. Many interesting nonlinear PDEs have finite-time singularities, and the presence of diffusion often limits one to using implicit or semi-implicit timestep methods for stability constraints.  Finite-time singularities are particularly challenging due to the large range of scales that one desires to resolve, often with adaptive spatial grids and adaptive timesteps.  
Here we show two examples of nonlinear PDEs for which the self-similar singularity structure has time and space scales that are resolvable using the RKL and RKG methods, without forcing even smaller timesteps. Compared to commonly-used implicit numerical methods, we achieve significantly smaller run time while maintaining comparable accuracy. 
We also prove numerical monotonicity for both the RKL and RKG methods under their linear stability conditions for the constant coefficient heat equation, in the case of infinite domain and periodic boundary condition, leading to a theoretical guarantee of the superiority of the RKL and RKG methods over traditional super-time-stepping methods, such as the Runge-Kutta-Chebyshev (RKC) and the orthogonal Runge-Kutta-Chebyshev (ROCK) methods. Code can be found at \url{https://github.com/ZT220501/SRK-Singularity}.
\end{abstract}

\begin{keywords}
semilinear heat equation, axisymmetric surface diffusion equation, self similarity, finite-time singularity, super-time-stepping explicit methods, monotone stability
\end{keywords}

\begin{AMS}
  65M20
\end{AMS}

\section{Introduction}
\label{sec:intro}
Non-linear partial differential equations (PDE) describe many types of physical systems, yet most of these models are stiff and difficult to solve using explicit numerical methods. \revision{Those with finite-time singularities are drawing increasing attention, most notably, the exploration of the unsolved Clay prize problem regarding finite-time singularity in the Navier-Stokes equation~\cite{Hou2022, Moffatt_Kimura_2023, RUKAVISHNIKOV2023115218, doi:10.1137/140966411}, and the recent development of neural networks~\cite{PhysRevLett.130.244002}}. In order to guarantee the validity and robustness of the exploration of finite-time singularity, advanced numerical methods with rigorous theoretical guarantees are needed. 

In this paper, we introduce fully explicit super-time-stepping methods to resolve finite-time singularities in nonlinear parabolic PDE. Ideal candidates for such methods are problems for which the singularity has space-time scaling that is consistent with the time-step limitations of the numerical method.  finite-time singularities are often characterized by self-similar behavior in which the smallest spatial scale scales like a power law in time.  In the range of such singularities, one would need the timestep to be constrained to scale like the spatial grid to this inverse power in order to accurately track the singularity.  By using super-time-stepping methods, we are able to resolve the order-one dynamics leading into the singularity along with the power-law scaling of the singularity itself, using adaptive mesh refinement.  We compare such methods with more commonly-used implicit methods that do not have timestep constraints that scale with the grid size.  

Two well-known problems that possess such scaling are the semilinear heat equation~\cite{bergerkohn}
\begin{equation}
\label{eqn:semiheat}
\begin{cases}
    u_t=u_{xx}+u^p,\quad p>1\quad (x, t)\in[-a, a]\times(0, T],\\
    u(a, t)=u(-a, t)=0,\\
    u(x, 0)=u_0(x),
\end{cases}
\end{equation}
and the axisymmetric surface diffusion equation~\cite{mullins1957thermal}
\begin{subequations}
    \label{eqn:surfacediffusion}
    \begin{equation}
    \begin{cases}
    \frac{\partial r}{\partial t}=\frac{1}{r}\frac{\partial}{\partial z}\Big[\frac{r}{\sqrt{1+r_z^2}}\frac{\partial}{\partial z}\mathcal{H}\Big]\quad (z, t)\in[-a, a]\times(0, T],\\
    r(a, t)=r(-a, t),\\
    r(z, 0)=r_0(z),
    \end{cases}
    \end{equation}
\text{where}
    \begin{equation}
    \mathcal{H}=\frac{1}{r\sqrt{1+r_z^2}}-\frac{r_{zz}}{(1+r_z^2)^{3/2}}.
    \end{equation}
\end{subequations}

Both \ref{eqn:semiheat} and \ref{eqn:surfacediffusion} are stiff equations with finite-time singularities, a common occurrence in problems in nonlinear physics~\cite{Eggers_Fontelos_2015}. With suitable initial conditions, the semilinear heat equation blows up to infinity in finite-time, whereas the surface diffusion equation dynamics pinches off to zero in finite-time (with a discontinuity forming in the first derivative). Theoretical analysis for these equations has been done extensively in the literature, which will be discussed in more details in Section~\ref{sec:pdeproperty}.

\revision{Many numerical approaches have been designed to resolve the  theoretical scaling analysis of finite time singularities.  These include both adaptive mesh refinement with (semi-)implicit schemes analysis~\cite{bernoff1998axisymmetric, COLEMAN1995123, COLEMAN1996,Symmetric,BBDK} and schemes based on dynamic rescaling in similarity variables~\cite{bergerkohn, LANDMAN1991393, LEMESURIER198878, Papanicolaou1994,hou2024l2basedstabilityblowuplog}. Semi-implicit methods may be slower than explicit methods since large systems of implicit equations need to be solved for each update}. Whereas explicit methods can save the computational cost of solving large systems of equations, they often have timestep restrictions for stability.  

\revision{Chebyshev, Legendre, and Gegenbauer polynomials are chosen in super-time-stepping Runge--Kutta schemes (RKC, RKL, RKG, respectively) based on their stability and spectral properties (see discussion in Sec.~\ref{subsec:rkl_rkg}). Chebyshev polynomials are ideal for stiff diffusive problems due to their optimal stability over real negative eigenvalues, enabling large explicit timesteps~\cite{hundsdorfer2013numerical}. But RKC schemes, while linearly stable, do not satisfy the convex monotone property~\cite{RKL2}. RKL and RKG both satisfy this monotone property as proven here and RKC and RKG are the focus of the current investigation.} To address the issues above, we use two classes of super-time-stepping methods, the Runge--Kutta--Legendre (RKL) methods~\cite{RKL2} and the Runge--Kutta--Gegenbauer (RKG) methods~\cite{SKARAS_RKG2021109879, OSULLIVAN_RKG2019209}, to solve for stiff PDEs~\ref{eqn:semiheat} and~\ref{eqn:surfacediffusion}, along with other techniques such as Strang splitting~\cite{strangsplitting}. Adaptive mesh refinement is also deployed so that as the solution approaches blow-up time, the spatial grid size, and therefore the timestep becomes correspondingly smaller. The RKL and RKG methods are $s$-stage Runge-Kutta type explicit methods, where $s\in\N$ is a number that can be freely chosen. Both methods can be of first or second-order based on construction, and their orders are independent of the choice of $s$. We call the first-order and second-order RKL methods as RKL1 and RKL2; similarly we call the first-order and second-order RKG methods as RKG1 and RKG2. 

The main effect of the stage number $s$ is in the restriction of the timestep size with respect to the spatial grid size. When $s$ increases, the stability restriction relaxes quadratically with respect to the growth of $s$. Based on this property of the RKL and RKG methods, we adaptively choose $s$ so that when the solution has not yet blown up, a large timestep can be used to quickly drive the solution into the blow-up regime without loss of stability and accuracy; when the solution is near the blow-up time, since small $dt$ is required to resolve the PDE scaling, we gradually decrease $s$ so that less run time is achieved.

The second-order parabolic equations and plenty of physical equations have the maximum principle. To capture such a property, we need to ensure the numerical monotone stability of the numerical method. The primary goal for designing the RKL methods was to ensure the monotone stability when solving the second-order parabolic equation for the entire numerical stability region, while the RKG methods were designed to further ensure the monotone stability for the Dirichlet boundary condition. Previous studies~\cite{Meyer2012, RKL2} have numerically verified the monotonicity of the numerical method up to a size sufficiently large $s$; however, to the best of our knowledge, no theoretical proof has been established in the existing literature. In this paper, by transforming the monotonicity condition into an equivalent combinatorial summation problem and estimating the oscillation in the summation, a rigorous proof for RKL methods is proposed when applied to the constant coefficient heat equation under the numerical stability constraint, without the boundary condition or for the periodic boundary condition. The monotone stability for RKG methods are also established based on the proof of RKL monotonicity, along with inductive property of the hypergeometric function.

This paper is organized as follows. In Section~\ref{sec:pdeproperty} we introduce the basic properties and the scalings we need to resolve in both of the PDEs. In Section~\ref{sec:alg} we revisit the RKL and RKG methods in detail and formulate our adaptive application of the RKL and RKG methods to equation~\ref{eqn:semiheat} and~\ref{eqn:surfacediffusion}, including the Runge-Kutta formulations of the methods, the choice of substep number $s$, and the adaptive mesh refinement strategy. Section~\ref{sec:experiments} describes the details of our experiments to test the run time, accuracy, and robustness of the numerical method based on the existing theoretical asymptotic estimation and scalings. Section~\ref{sec:monotoneproof} formulates the monotonicity results and gives a detailed proof of them. Finally, we conclude in Section~\ref{sec:conclusions} and provide directions for future numerical and theoretical works.

\section{Properties of the PDEs}
\label{sec:pdeproperty}
In this section, we introduce the essential properties for solutions of the PDEs~\ref{eqn:semiheat} and~\ref{eqn:surfacediffusion}. In Section~\ref{sec:experiments}, the properties are tested numerically in order to verify the validity of our numerical simulation.

\subsection{Semilinear Heat Equation}
\label{semiheat_property}
The semilinear heat equation~\ref{eqn:semiheat} is a well-known PDE with finite-time singularity~\cite{bergerkohn, BricmontKupiainen, MerleZaag}; more precisely, with non-negative initial condition $u_0(x)$, there exists some $T>0$ such that $\underset{t\to T}{\lim}\|u(x, t)\|_\infty=\infty$. The asymptotic behavior of the semilinear heat equation has been studied extensively; a generic stable self-similar formulation was first proposed by M. Berger and R. Kohn~\cite{bergerkohn} to be
\begin{equation}
\label{eqn:semiheat_asymp}
    u(x, t)\sim(T-t)^{\frac{-1}{p-1}}\Big[(p-1)+\frac{(p-1)^2}{4p}\frac{x^2}{(T-t)|\log(T-t)|}\Big]^{\frac{-1}{p-1}}.
\end{equation}
Following that, it was proved by J. Bricmont and A. Kupiainen~\cite{BricmontKupiainen} via analysis for the unstable manifolds of the long-time dynamical system formulation of the problem. Later, F. Merle and H. Zaag~\cite{MerleZaag} proved the same result using a similar but simpler method as in~\cite{BricmontKupiainen}. Recently, a novel approach based on the $L^2$ stability analysis was used to prove the same asymptotic behavior for the case where $p=2$~\cite{hou2024l2basedstabilityblowuplog}.

For simplicity, we follow the setup in~\cite{bergerkohn} to assume that our initial condition $u_0(x)$ is symmetric with respect to the origin \revision{and has its maximum there}, so that the singularity is always at $x=0$. With the assumption above and based on the asymptotic behavior~\ref{eqn:semiheat_asymp}, one observation by plugging in $x=0$ is that $\|u(\cdot, t)\|_{L^\infty}\sim\Big[(T-t)(p-1)\Big]^{-\frac{1}{p-1}}$; taking logarithm on both sides, we get
\begin{equation}
\label{eqn:semiheat_max}
\log\|u(\cdot, t)\|_{L^\infty}=-\frac{1}{p-1}\log(T-t)-\frac{1}{p-1}\log(p-1)
\end{equation}
which shows that the maximum value of $u(x, t)$ is linear with respect to the time remaining before the blow-up in the $\log$-$\log$ scale, with slope $-\frac{1}{p-1}$. 

In order to perform an accurate numerical simulation, it is essential to understand and resolve the intrinsic scaling in the solution of the PDE. 
The numerical simulation near the blow-up time must resolve the scaling $dt\sim dx^2$ entailed by the PDE itself with logarithmic correction; this indicates that we should use small number of substages $s$ in the stabilized Runge-Kutta methods near the blow-up time.   In order to resolve the structure of the singularity we need to choose $dt\sim \mathcal{O}(dx^2)$, 
which allows us to use a smaller $s$ near the blow-up, resulting in more computational efficiency. 

Finally, we could also estimate the speed with which the maximum of $u(x, t)$ increases. Taking the derivative of~\ref{eqn:semiheat_asymp} on both sides, $u_t(0, t)\sim[(T-t)(p-1)]^{-\frac{p}{p-1}}$; taking the logarithm, it's easy to derive that
\begin{equation}
\label{eqn:semiheat_timederivative}
\log u_t(0, t)\sim p\log\|u(\cdot, t)\|_{\infty},
\end{equation}
so that the growth rate of the maximum value is linear in the maximum value itself in the $\log$-$\log$ scale with slope $p$, without other additional parameters.

\subsection{Surface Diffusion Equation}
The surface diffusion equation 
\begin{equation}
\label{eqn:surfdiff}
U=\nabla_s^2\mathcal{H}
\end{equation}
describes the motion of a surface driven by its mean curvature; here $U$ denotes the surface normal velocity, $\nabla_s^2$ denotes the surface Laplacian, and $\mathcal{H}$ denotes the normalized mean curvature of the evolving surface. It was first proposed and analyzed by Mullins~\cite{mullins1957thermal, mullins1995mass, Mullins1999} to model the evolution of microstructure in polycrystalline materials. Other factors neglected in the original model were analyzed, including the anisotropy case~\cite{CAHNTYLOR19941045, davi1990motion, klinger2001effects, XIN20032305}, the multiple grooves case~\cite{martinpaul2007}, and the volume diffusion case~\cite{HARDY1991467}. Special simplified cases of were also extensively studied in the existing literature, such as the axisymmetric case~\cite{WONG199855, bernoff1998axisymmetric} and the linearized case~\cite{mullins1957thermal, martinpaul2007, kalantarova2021self}. Self-similar solutions were derived in each of the two cases above. 

Besides theoretical analysis, numerical computations have also been done for the surface diffusion equation in various cases. Coleman, Falk, and Moakher~\cite{COLEMAN1995123, COLEMAN1996} first computed the axisymmetric surface diffusion equation using a finite element discretization and indicates that with a symmetric initial condition, there exists a pinchoff in finite-time. Following that, Bernoff, Bertozzi, and Witelski~\cite{bernoff1998axisymmetric} used backward Euler method along with adaptive mesh refinement strategy to compute the pinchoff with high resolution. Alternating direction implicit (ADI) methods are also used to solve the fluid flows driven by surface diffusion in higher dimensions~\cite{WITELSKI2003331}. However, all the existing methods are fully implicit or semi-implicit since the stiffness constraint for a usual explicit method is overly restrictive; therefore, the computational cost for solving the nonlinear system of equations becomes expensive. Therefore, the fully explicit RKL and RKG methods would largely reduce the run time.

In this paper, we focus on the three dimensional axisymmetric case. By plugging into the polar coordinate using $r\equiv r(\theta, z, t)$ where $r$ is considered to be the graph of a function, Equation~\ref{eqn:surfdiff} becomes
$$\frac{r}{q}\frac{\partial r}{\partial t}=\nabla_s^2\mathcal{H},\quad q=\sqrt{r^2(1+r_z^2)+r_\theta^2}$$
\revision{where $\nabla_s^2$ denotes the polar surface Laplacian
$$\nabla^2_s=\frac{1}{q}\Big[\frac{\partial}{\partial z}\Big(\frac{r^2+r_\theta^2}{q}\frac{\partial}{\partial z}-\frac{r_zr_\theta}{q}\frac{\partial}{\partial\theta}\Big)+\frac{\partial}{\partial\theta}\Big(\frac{1+r_z^2}{q}\frac{\partial}{\partial\theta}-\frac{r_zr_\theta}{q}\frac{\partial}{\partial z}\Big)\Big].$$}
By assuming the axisymmetric case, $r\equiv r(z, t)$, we get exactly Equation~\ref{eqn:surfacediffusion}. The similarity solution derived in~\cite{bernoff1998axisymmetric} takes the form of
\begin{equation}
\label{eqn:surfdiffscaling}
r(z, t)\sim\tau^{1/4}\overline{R}_i\Big(\frac{z}{\tau^{1/4}}\Big),\quad\tau=T-t.
\end{equation}

In contrast to the blow-up behavior of the semilinear heat equation, the similarity solution~\ref{eqn:surfdiffscaling} has a pinchoff at $z=0$ in finite-time, $\underset{t\to T}{\lim}\min r(z, t)=0$, with non-negative initial condition symmetric with respect to the origin. Therefore, a natural characteristic scaling within the PDE itself is $z^4\sim(T-t)$, and any numerical simulation needs to resolve the correct scaling $dt\sim dx^4$ near the pinchoff time to obtain the correct solution. Another result in~\cite{bernoff1998axisymmetric} shows that there exists a countable family of self-similar symmetric pinchoff solutions $\overline{R}_i(\eta)$, $i\in\N_0$, each with a different far-field cone angle $\overline{R}_i(\eta)\sim\pm c_i\eta$ as $\eta\to\pm\infty$, such that $c_i>c_{i+1}$ for all $i\in\N_0$. \revision{The half-cone angles of the self-similar solutions are given by $\phi_i=\arctan(c_i)$, where the physically relevant solution is the one with $i=0$.} Therefore, we expect our numerical simulation to capture the similarity solution with the correct cone angle. Finally, we also want to simulate the rate of change of the similarity solution. By a calculation in~\cite{bernoff1998axisymmetric}, we know that $\overline{R}_0(0)^4/4=0.060575684$, therefore $\Big|\frac{d}{dt}\overline{R}_0(0)\Big|=0.060575684/\overline{R}_0(0)^3$; \revision{taking the $\log$-$\log$ scale, we clearly observe that the rate of change of the minimum value scales like the negative of the minimum value cubed}.

In order to capture the fine scaling in both of the PDEs above, an important strategy we use is the adaptive mesh refinement in the simulations. We gradually add grid points near the singularity at the origin along with the growth or pinchoff of the solution, so that the solution computed near the singularity is computed on a fine mesh while away from the blowing up point is computed on a coarse mesh. Fig.~\ref{fig:meshrefinement} shows the mesh refinement for two times. \revision{The criterion for the time to perform a mesh refinement is based an easily measurable criteria for the evolving solution of the PDE}, and more details can be found in Appendix~\ref{app:amr}.

In Section~\ref{sec:alg}, we describe details of the numerical methods we used. Following that, Section~\ref{sec:experiments} describes various experiments based on the theoretical behaviors described in the current section in order to verify the validity of our numerical method. Different initial number of grid points are used as a proof of the robustness of our numerical method. The run time comparison between the explicit super-time-stepping methods and the classical (semi-)implicit methods is also listed in Section~\ref{sec:experiments} so that the superiority of our method is transparent.

\begin{figure}[!t]
    \centering
    \includegraphics[width=0.32\linewidth]{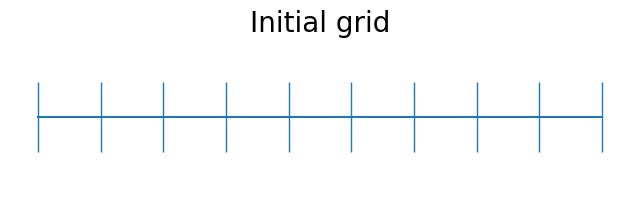}
    \includegraphics[width=0.32\linewidth]{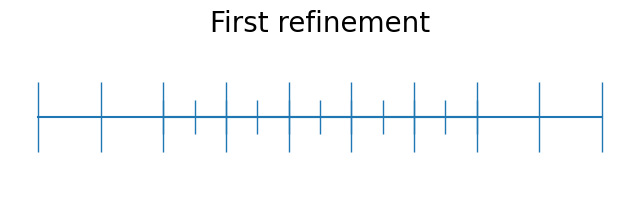}
    \includegraphics[width=0.32\linewidth]{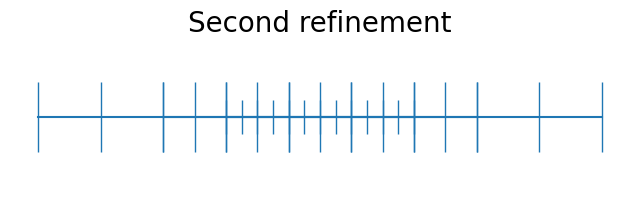}
    \caption{Adaptive mesh refinement strategy \revision{similar to the one used in \cite{Symmetric}}. The above three plots show the process of two mesh refinements around the origin. Each vertical $|$ represents a grid point, and the shorter $|$ at each level represents the new grid points added.}
    \label{fig:meshrefinement}
\end{figure}

\section{Numerical method}
\label{sec:alg}
This section gives a brief introduction to the numerical method we use, including the explicit super-time-stepping methods and the adaptive mesh refinement strategy based on the property of the PDEs themselves. We denote the initial timestep as $dt$, the spatial grid size as $dx$, and the number of stages for both the RKL and RKG methods as $s$. The central difference spatial approximation is used for solving both PDEs, while Strang splitting is also used for the semilinear heat equation in order to adopt the RKL and RKG methods for the parabolic portion of the right-hand-side operator.

\subsection{Runge--Kutta--Legendre and Runge--Kutta--Gegenbauer Methods}
\label{subsec:rkl_rkg}
The Runge--Kutta--Legendre methods~\cite{RKL2} and Runge--Kutta--Gegenbauer methods~\cite{OSULLIVAN_RKG2019209, SKARAS_RKG2021109879} are super-time-stepping methods to solve for an ordinary differential equation (ODE) system 
\begin{equation}
\label{autoode}
    \frac{du}{dt}=\mathbf{M}u
\end{equation}
where $\mathbf{M}$ is a symmetric, constant coefficient matrix that represents the discretization of a parabolic operator; the operator $\mathbf{M}$ is required to have non-positive, real eigenvalues to ensure convergence. 

The stability and the derivation of the RKL and RKG methods are based on the shifted Legendre polynomials and Gegenbauer polynomials, respectively. The general form of the RKL and RKG methods is
\begin{align}
u^{t+dt}=R_s(dt\mathbf{M})u^t
\end{align}
where $R_s(dtM)$ is the ``stability polynomial" of the numerical method. Numerical stability is ensured if $|R_s(dt\lambda)|\le1$ for all eigenvalues $\lambda$ of $M$; therefore, boundedness of Legendre polynomial and Gegenbauer polynomials enables them to be the stability polynomial choices~\cite{Koornwinder2013, RKL2, SKARAS_RKG2021109879}. By derivations in~\cite{RKL2, SKARAS_RKG2021109879}, the RKL1, RKL2, RKG1, and RKG2 methods, respectively, take the form of
\begin{align}
\label{eqn:RKL1}
u^{t+dt}=P_{s}\Big(I+\frac{2dt}{s^2+s}\mathbf{M}\Big)u^t;
\end{align}
\begin{align}
\label{eqn:RKL2}
u^{t+dt}=\Big[\Big(1-\frac{s^2+s-2}{2s(s+1)}\Big)I+\frac{s^2+s-2}{2s(s+1)}P_{s}\Big(I+\frac{4dt}{s^2+s-2}\mathbf{M}\Big)\Big]u^t;
\end{align}
\begin{align}
\label{eqn:RKG1}
u^{t+dt}=\frac{2}{(s+1)(s+2)}C^{(\lambda)}_{s}\Big(I+\frac{4dt}{s^2+3s}\mathbf{M}\Big)u^t;
\end{align}
\begin{align}
\label{eqn:RKG2}
u^{t+dt}=\Big[\Big(1-\frac{2(s-1)(s+4)}{3s(s+3)}\Big)I+\frac{4(s-1)(s+4)}{3s(s+1)(s+2)(s+3)}C^{(\lambda)}_{s}\Big(I+\frac{6dt}{(s+4)(s-1)}\mathbf{M}\Big)\Big]u^t,
\end{align} 
where $P_s(x)$ is the $s$-th degree Legendre polynomial and $C_s^{(\lambda)}(x)$ is the $s$-th degree Gegenbauer polynomial with parameter $\lambda$~\cite{Koornwinder2013}. In all of our experiments, $\lambda$ is chosen to be $\frac{3}{2}$ in order to be consistent with~\cite{SKARAS_RKG2021109879}.

To get a Runge-Kutta formulation of the RKL and RKL methods,~\cite{RKL2, SKARAS_RKG2021109879} used recurrence relationship of the Legendre and Gegenbauer polynomials. For example, the RKL2 method used the Legendre polynomial recurrence relationship
\begin{equation}
    sP_s(x)=(2s-1)xP_{s-1}(x)-(s-1)P_{s-2}(x).
\end{equation}
Along with the several initial known Legendre polynomials, after rearranging terms, one superstep of the $s$-stage RKL2 method can be written out as
\begin{align}
\label{eqn:RK-presentation}
\begin{split}
    &Y_0=u(t_0), \quad Y_1=Y_0+\Tilde{\mu}_1dt\mathbf{M}Y_0,\\
    &Y_j=\mu_jY_{j-1}+\nu_jY_{j-2}+(1-\mu_j-\nu_j)Y_0+\Tilde{\mu}_jdt\mathbf{M}Y_{j-1}+\Tilde{\gamma}_jdt\mathbf{M}Y_0, 2\le j\le s,\\
    &u(t_0+dt)=Y_s,
\end{split}
\end{align}
where the parameters are specified in~\cite{RKL2}.
Similar derivations work for the RKL1 method~\cite{RKL2} and the RKG methods~\cite{SKARAS_RKG2021109879} by using the inductive relation between the Legendre and Gegenbauer polynomials of different degrees.

Notice that although the formulas seem to be complicated, the method is entirely explicit and can be easily implemented with only three stages of the solution storage required. Moreover, the stability constraint is much more relaxed than \revision{a usual} explicit method: denote $dt_{expl}:=\frac{2}{\lambda_{\max}}$ where $\lambda_{\max}$ is the maximum eigenvalue of the operator $\mathbf{M}$; as shown in~\cite{RKL2}, the largest possible timestep is $dt_{\max}=dt_{expl}\frac{s^2+s-2}{4}$. The restriction is relaxed quadratically with respect to $s$, so that when we choose $s$ large enough, the restriction on $dt$ related to $dx$ is almost negligible. Moreover, it is simple to control the stage number $s$ in the implementation. These advantages lead to the adaptive use of $s$ in our complete algorithm.

\subsection{Numerical Discretization}
\label{subsec:numericalprocess}
Strang splitting~\cite{strangsplitting} along with the super-time-stepping method described above to solve the semilinear heat equation~\ref{eqn:semiheat}. \revision{One of the operators in the Strang splitting is chosen to be the discrete Laplacian with standard central difference approximation, while the other operator is chosen to be $u\mapsto u^p$}. The axisymmetric surface diffusion equation \ref{eqn:surfacediffusion} is solved by discretization of the right-hand side via the central difference approximation in all terms, and then use the method of lines to solve the PDE by the super-time-stepping method. In both cases, the adaptive mesh refinement strategy shown in Fig.~\ref{fig:meshrefinement} are used. More details are described in Appendix~\ref{app:amr} and~\ref{app:discretization}.


\section{Experiment results}
\label{sec:experiments}
We now present the computational results. We compare the accuracy and run time of our explicit super-time-stepping methods with the backward Euler implicit method. 
Various initial grid sizes are used to guarantee the robustness of the RKL and RKG methods. We present representative results here and more detailed experiment results can be found in the supplementary documents.

\subsection{Semilinear Heat Equation }
\label{subsec:semiheatexperiment}

We consider the semilinear heat equation~\ref{eqn:semiheat} for the cases $p=2$ and $p=3$. The spatial domain is $[-1, 1]$ with symmetry about the origin. In both cases, the Dirichlet boundary condition $u(\pm1, t)=0$ is used. The initial condition is $u(x, 0)=u_0(x)=\frac{10}{1-0.5\cos(\pi x)}-\frac{20}{3}$ for all cases. Periodic boundary condition or other non-negative even initial conditions can also work equally well for our numerical method, as long as the initial maximum value $\|u_0\|_{L^\infty}$ is not overly small or large. The discretization and the entire algorithm are discussed in detail in Section~\ref{subsec:numericalprocess}. The initial values of $dt$ and $dx$ satisfy $dt=\frac{1}{8}dx$, and we use initial $dx=\frac{1}{128}, \frac{1}{256}, \frac{1}{512}$ as test cases. The spatial grid sizes are chosen to be powers of 2, 
so that in their digital representation is exact. The initial stage number $s$ is 200, guaranteeing in all cases that $dt/dx^2$ are in the stability region of the super-time-stepping method.

\begin{figure}[ht!]
\vspace{-0.5em}
\centering
\begin{subfigure}{0.49\linewidth}
\captionsetup{justification=centering, margin=0.5cm}
\includegraphics[width=0.9\textwidth]{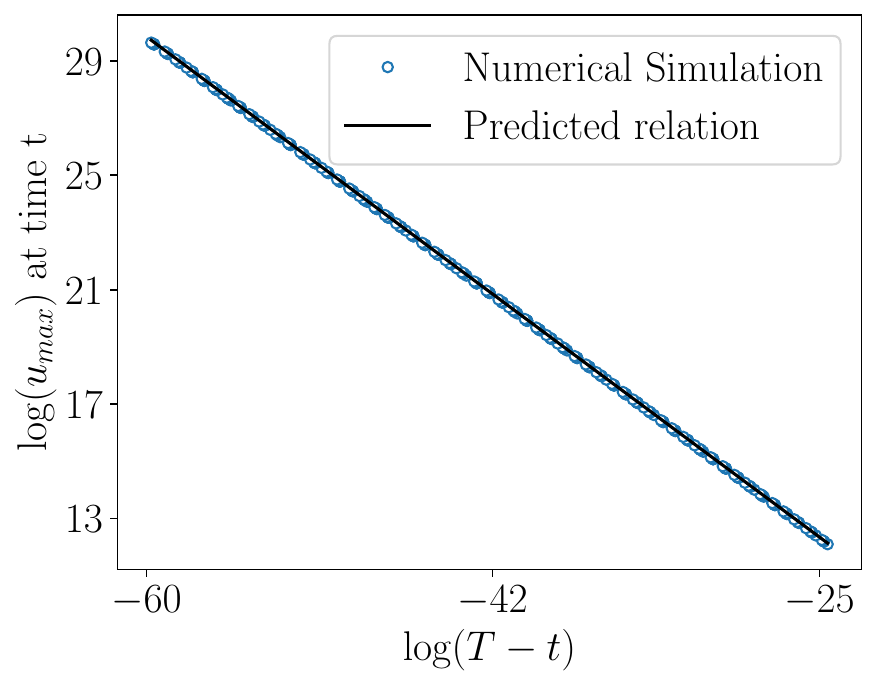}
\caption{RKL2 $\log$-$\log$ relationship between $T-t$ and maximum value}
\end{subfigure}
\begin{subfigure}{0.49\linewidth}
\captionsetup{justification=centering, margin=0.5cm}
\includegraphics[width=0.9\textwidth]{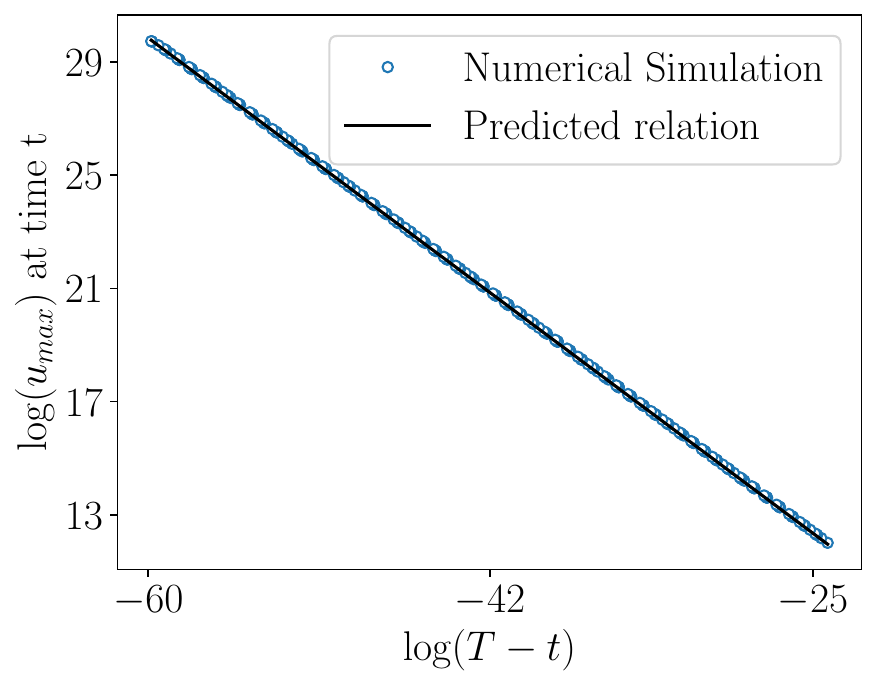}
\caption{RKG2 $\log$-$\log$ relationship between $T-t$ and maximum value}
\end{subfigure}
\vspace{-1em}
\caption{Log-log relationship between the $T-t$ and maximum value $\|u(\cdot, t)\|_{\infty}$, where $T$ is the theoretical blow-up time, with $p=3$. The calculated values fit perfectly with the predicted line, which has the expected slope $-\frac{1}{p-1}$.}
\label{fig:timevsheight}
\end{figure}

Fig.~\ref{fig:timevsheight} shows a $\log$-$\log$ relation between the time to blow-up and the maximum value; the predicted value comes from Equation~\ref{eqn:semiheat_max}. The plot is a straight line with slope $-\frac{1}{p-1}$ and horizontal intercept $-\frac{1}{p-1}\log(p-1)$. Since the exact blow-up time $T$ is unknown, we use the final time of our numerical simulation to approximate the theoretical blowing up time.


\begin{figure}[ht!]
\vspace{-0.5em}
\centering
\begin{subfigure}{0.49\linewidth}
\captionsetup{justification=centering, margin=0.15cm}
\includegraphics[width=0.9\textwidth]{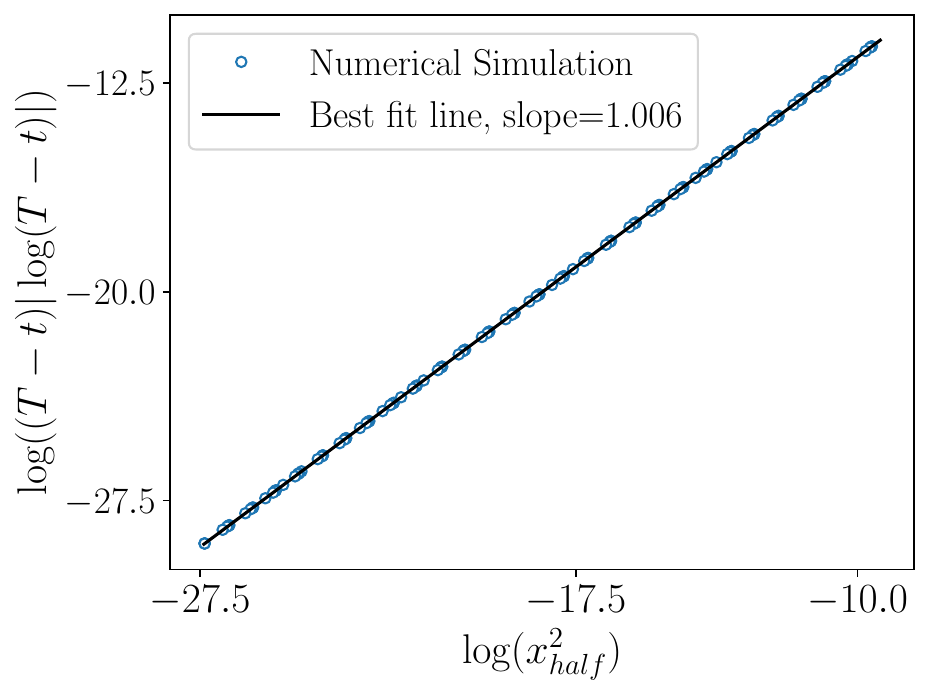}
\caption{RKL2 similarity variable scaling}
\end{subfigure}
\begin{subfigure}{0.49\linewidth}
\captionsetup{justification=centering, margin=0.15cm}
\includegraphics[width=0.9\textwidth]{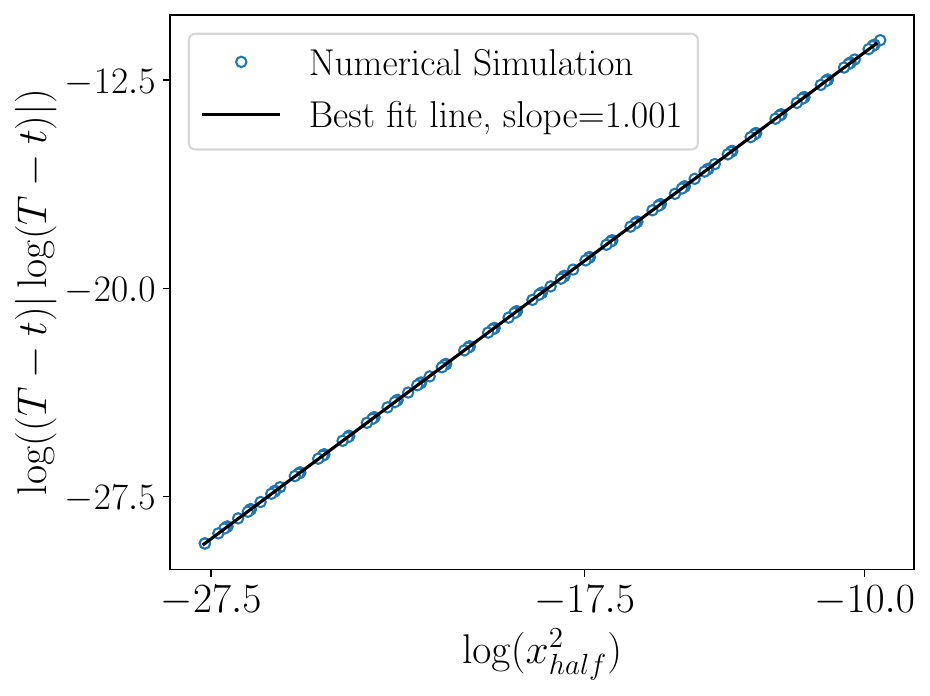}
\caption{RKG2 similarity variable scaling}
\end{subfigure}
\vspace{-1em}
\caption{Log-log relationship between the squared half-width and $(T-t)|\log(T-t)|$, with $p=3$. The value $x_{\text{half}}$ satisfies that $u(x_{\text{half}}, t)=\frac{1}{2}\|u(\cdot, t)\|_{\infty}$. The calculated values fit well with the predicted line, which has expected slope around $1$, indicating that the numerical solution accurately captures the scaling of the semilinear heat equation with the log correction.}
\label{fig:similarityscaling}
\end{figure}

Fig.~\ref{fig:similarityscaling} shows the $\log$-$\log$ relationship between the square of the numerically calculated half-width and the time variable quantity $(T-t)|\log(T-t)|$. The half-width provides a natural measurement of the spatial scaling of the blow-up. Our goal is to track the similarity variable $\frac{x^2}{(T-t)|\log(T-t)|}$. According the the similarity profile in Equation~\ref{eqn:semiheat_asymp}, we expect the plot to indicate a linear relationship with slope $1$. The results in Fig.~\ref{fig:similarityscaling} confirms our expectation, indicating that our numerical simulations accurately captures the similarity profile.

\begin{figure}[ht!]
\vspace{-0.5em}
\centering
\begin{subfigure}{0.49\linewidth}
\captionsetup{justification=centering, margin=0.15cm}
\includegraphics[width=0.9\textwidth]{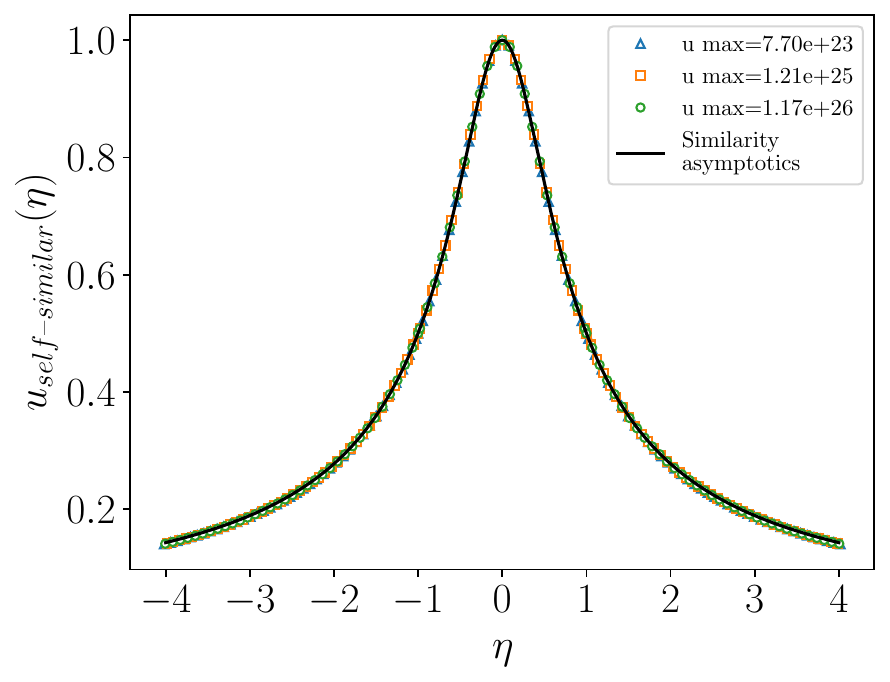}
\caption{RKL2 similarity solution}
\end{subfigure}
\begin{subfigure}{0.49\linewidth}
\captionsetup{justification=centering, margin=0.15cm}
\includegraphics[width=0.9\textwidth]{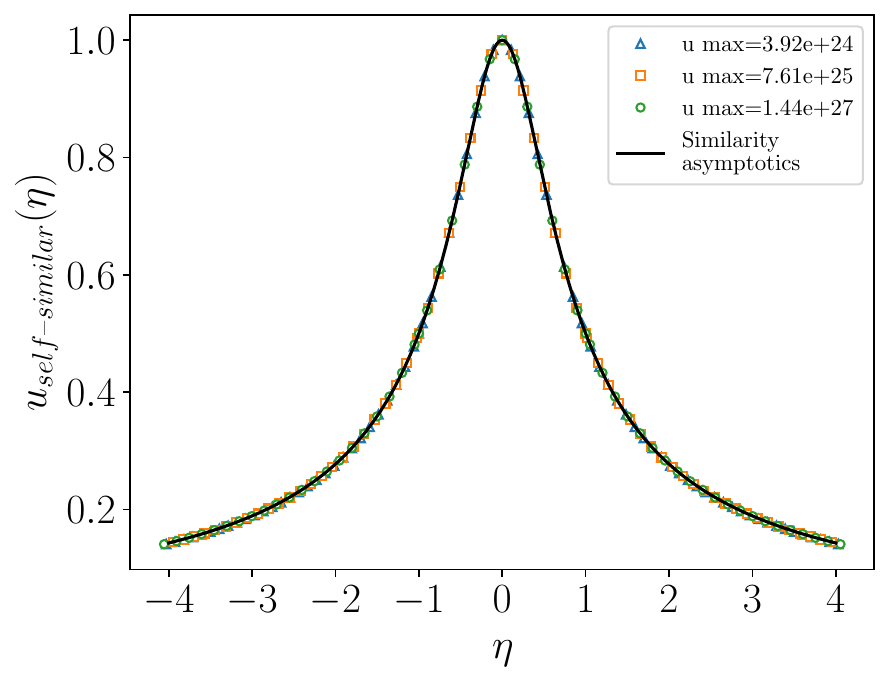}
\caption{RKG2 similarity solution}
\end{subfigure}
\vspace{-1em}
\caption{Rescaled RKL2 and RKG2 simulations for selected times comparing with the expected profile, with $p=3$. The x-axis is divided by $x_{\text{half}}$ where $u(x_{\text{half}}, t)=\frac{1}{2}\|u(\cdot, t)\|_{\infty}$, and the $u$ value is divided by $\|u(\cdot, t)\|_{\infty}$ at each selected time $t$. Similarity variable $\eta:=x/x_{\text{half}}$ is introduced. The solid line is the expected asymptotic profile, and the markers are the simulated values after the rescaling for selected times. The expected profile is given by Equation~\ref{eqn:similaritycollapse}.}
\label{fig:semilienar_rescaling}
\end{figure}

\begin{figure}[ht!]
\vspace{-0.5em}
\centering
\begin{subfigure}{0.49\linewidth}
\captionsetup{justification=centering, margin=0.5cm}
\includegraphics[width=0.9\textwidth]{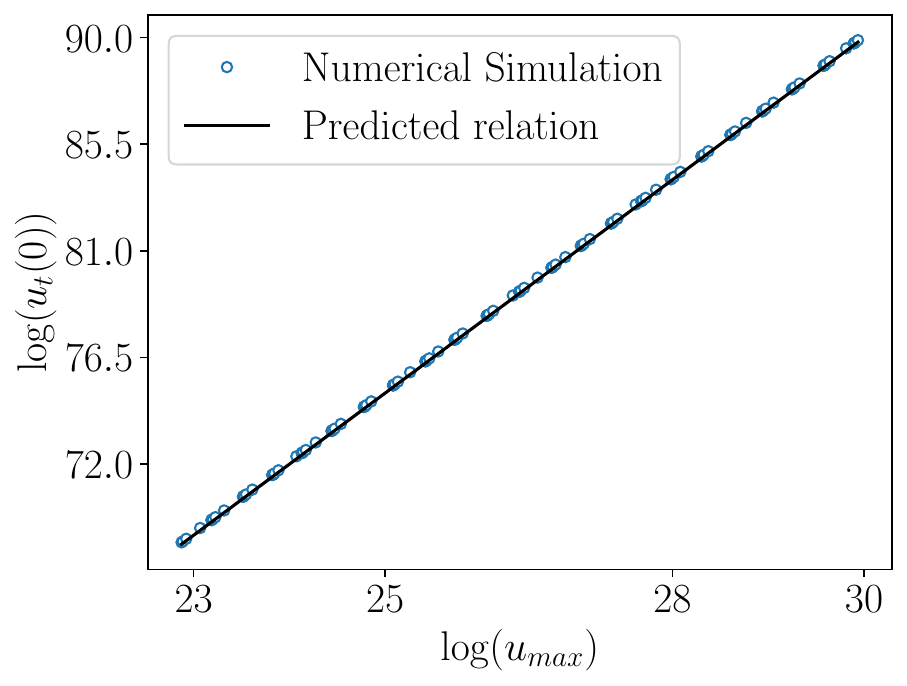}
\caption{RKL2 $\log$-$\log$ relationship between the maximum value and the time derivative at $x=0$}
\end{subfigure}
\begin{subfigure}{0.49\linewidth}
\captionsetup{justification=centering, margin=0.5cm}
\includegraphics[width=0.9\textwidth]{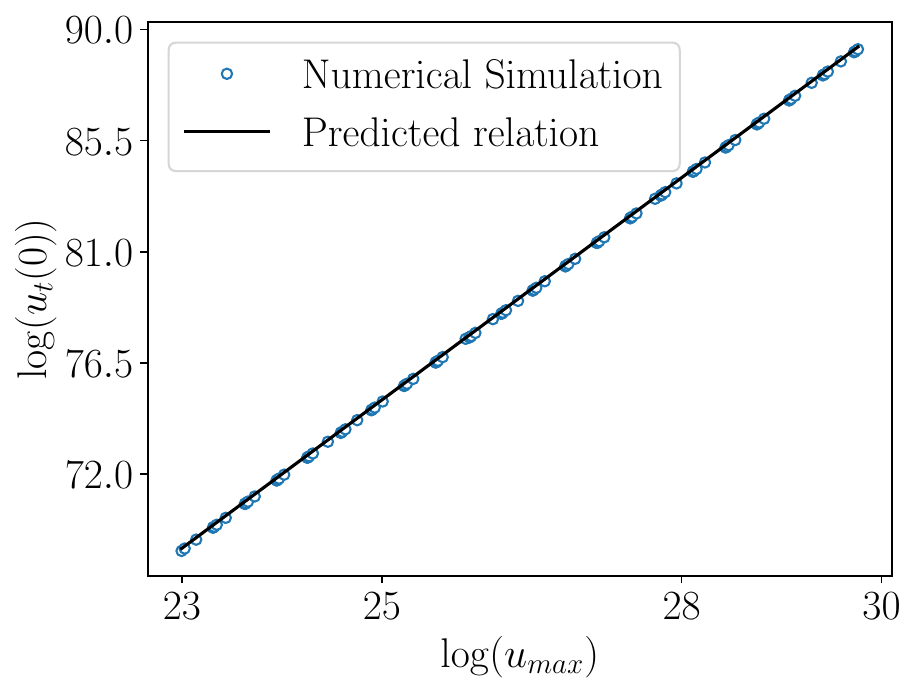}
\caption{RKG2 $\log$-$\log$ relationship between the maximum value and the time derivative at $x=0$}
\end{subfigure}
\vspace{-1em}
\caption{Log-log relationship between the maximum value $\|u(\cdot, t)\|_{\infty}$ and the time derivative of the solution at $x=0$, $p=3$. The time derivative is calculated numerically using forward difference approximation based on the numerical solution.}
\label{fig:timederivative}
\end{figure}

We investigate the spatial scaling in more detail. By the self-similarity of the asymptotic behavior, the shape of the solutions at different times should collapse onto a single curve up to a rescaling. We select several different times; at each time $t$, we divide $x$ by $x_{\text{half}}(t)$ and divide $u$ by $\|u(\cdot, t)\|_{L^\infty}$. According to previous calculations, the rescaled $u_{re}(x, t)$ takes the form of
\begin{equation}
\label{eqn:similaritycollapse}
    u_{re}(x, t)\sim(p-1)^{\frac{1}{p-1}}[p-1+(p-1)x^2(2^{p-1}-1)]^{-\frac{1}{p-1}}=\Big(\frac{1}{1+x^2(2^{p-1}-1)}\Big)^{\frac{1}{p-1}},
\end{equation}
which is time independent as expected. The rescaled solution is plotted in Fig.~\ref{fig:semilienar_rescaling}; in each subplot, three solutions at different times are plotted, each with maximum value above $10^{20}$. As can be seen in the plots, the markers all lie on the expected line provided by equation~\ref{eqn:similaritycollapse}.

Finally, we measure the accuracy for the first derivative of the solution. The predicted relationship in equation~\ref{eqn:semiheat_timederivative} is validated in Fig.~\ref{fig:timederivative}, by taking a $\log$-$\log$ plot of maximum value of the function. As shown in the plots, the data points perfectly fit the predicted relationship, with repeated piecewise patterns indicating the mesh refinement behavior. With all the plots above, we conclude that the super-time-stepping methods simulates well the semilinear heat equation over many orders of magnitude, showing the success of this fully explicit method applied to a stiff equation with a highly refined spatial grid.

Table~\ref{table:semiheattime}
shows the run time of our calculation compared to a semi-implicit time-stepping method, \revision{using the standard LU factorization linear system solver in Python}. Both $p=2$ and $p=3$ cases are tested, with three initial spatial grid sizes $dx=\frac{1}{128}, \frac{1}{256}, \frac{1}{512}$ and initial timestep $dt=\frac{1}{8}dx$ in each case. The same adaptive time-stepping and adaptive mesh refinement are used in all the experiments. We stop the simulation once the maximum value of the solutions reaches $10^{30}$.
The run time required for our explicit RKL and RKG methods are significantly less than the run time of the semi-implicit method.

\begin{table}[!t]
    \begin{center}
    \begin{tabular}{|c|c|c|c|c|}
    \hline
    \multicolumn{2}{|c|}{ }& \textit{dx=$\frac{1}{128}$} & \textit{dx=$\frac{1}{256}$} & \textit{dx=$\frac{1}{512}$}\\
    \hline
    \multirow{3}{*}{\textit{\textbf{p=2}}} 
    &\textit{Semi-implicit} & 1701.48s & 5471.50s & 21357.80s\\
    \cline{2-5}
    &\textit{RKL2} & 114.29s & 240.80s & 759.73s\\
    \cline{2-5}
    &\textit{RKG2} & \textbf{82.05}s & \textbf{177.41}s & \textbf{649.39}s\\
    \hline
    \multirow{3}{*}{\textit{\textbf{p=3}}} 
    &\textit{Semi-implicit} & 4482.14s & 19139.95s & 74925.77s\\
    \cline{2-5}
    &\textit{RKL2} & \textbf{202.21}s & \textbf{465.41}s & \textbf{1021.62}s\\
    \cline{2-5}
    &\textit{RKG2} & 299.55s & 754.21s & 1597.65s\\
    \hline
    \end{tabular}
    \end{center}
    \caption{Run time comparison between semi-implicit method and the stabilized Runge--Kutta methods for the semilinear heat equation. The fastest method in each case is bold. We stop the simulation when the solution blows up to $10^{30}$ both $p=2$ and $p=3$. All the stabilized Runge-Kutta simulations significantly outperform the classical implicit-explicit method, by at least more than 20 times.}
    
    \label{table:semiheattime}
\end{table}

\subsection{Surface Diffusion Equation}
Simulations of the surface diffusion equation~\ref{eqn:surfacediffusion} are conducted. First we verify the cone angle near the pinchoff. As mentioned in~\cite{bernoff1998axisymmetric}, the cone angle of the first analytic similarity solution is $46.0444^\circ$. \revision{This corresponds to the first derivative of the Eulerian variable solution near the pinchoff point and also to the far field derivative of the similarity solution, yielding a slope of approximately 1.037 in absolute value}. The spatial domain is chosen to be $[-2\pi, 2\pi]$ and periodic boundary conditions are used. The initial condition is  $u(x, 0)=-1.8(\frac{1}{4}\cos(\frac{x}{2})+\frac{1}{4})+1.2$, so that it is symmetric about the origin with moderately small minimum value. We test our numerical method with initial $dx=\frac{4\pi}{128}, \frac{4\pi}{256}, \frac{4\pi}{512}$ and initial $dt=10^{-5}$ for all the cases. Two and one initial mesh refinements are applied the first two cases respectively before the simulation for more accurate simulation near the singularity.

\begin{figure}[ht!]
\vspace{-0.5em}
\centering
\begin{subfigure}{0.49\linewidth}
\captionsetup{justification=centering, margin=1cm}
\includegraphics[width=0.9\textwidth]{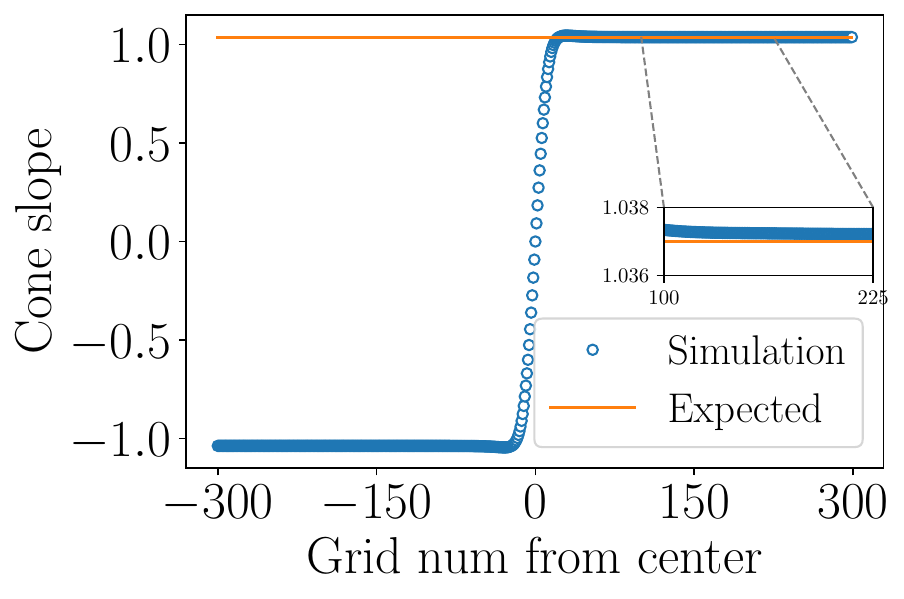}
\caption{RKL2 cone slope with minimum value $10^{-8}$}
\end{subfigure}
\begin{subfigure}{0.49\linewidth}
\captionsetup{justification=centering, margin=1cm}
\includegraphics[width=0.9\textwidth]{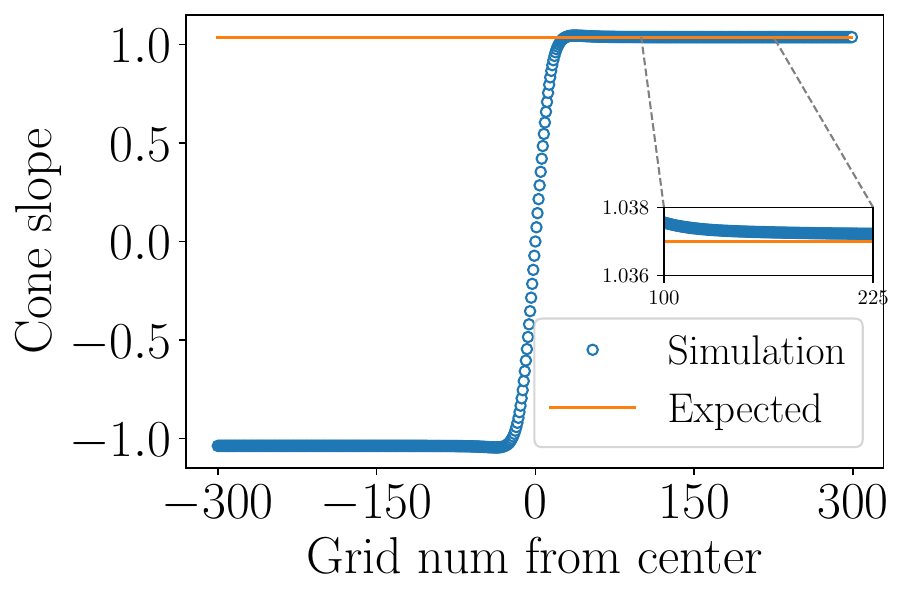}
\caption{RKL2 cone slope with minimum value $10^{-10}$}
\end{subfigure}
\vspace{-1em}
\caption{Surface diffusion equation cone angle test for the RKL2 simulations, with minimum value of $r(t)$ reaches to around $10^{-8}$, and $10^{-10}$. Initial $dx=\frac{4\pi}{512}$. The numerical first derivative is computed using standard forward difference; the horizontal orange line indicates the theoretically expected first derivative in absolute value.}
\label{fig:surfdiffangle}
\end{figure}

\begin{figure}[ht!]
\vspace{-0.5em}
\centering
\begin{subfigure}{0.49\linewidth}
\captionsetup{justification=centering, margin=1cm}
\includegraphics[width=0.9\textwidth]{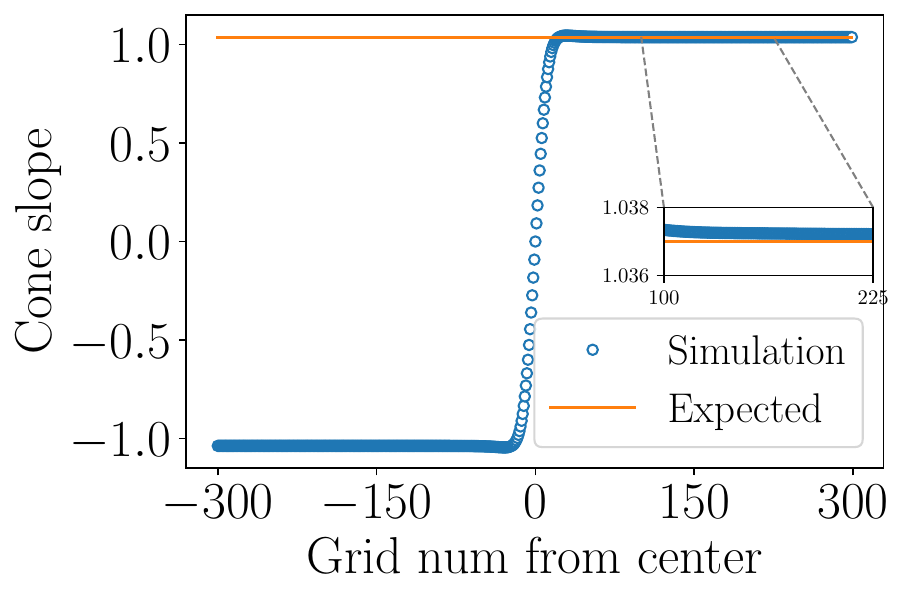}
\caption{RKG2 cone slope with minimum value $10^{-8}$}
\end{subfigure}
\begin{subfigure}{0.49\linewidth}
\captionsetup{justification=centering, margin=1cm}
\includegraphics[width=0.9\textwidth]{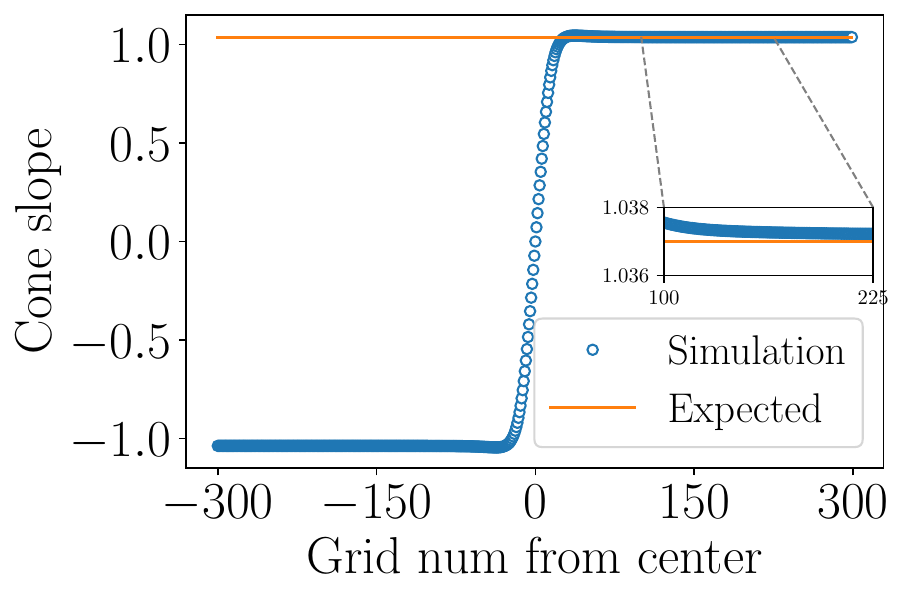}
\caption{RKG2 cone slope with minimum value $10^{-10}$}
\end{subfigure}
\vspace{-1em}
\caption{Surface diffusion equation cone angle test for the RKG2 simulations, under the same condition as Fig.~\ref{fig:surfdiffangle}. Initial $dx=\frac{4\pi}{512}$. Both RKL2 and RKG2 methods achieve similar results; see Fig.~\ref{fig:surfdiffangle} for more details.}
\label{fig:surfdiffangle_RKG}
\end{figure}

\begin{figure}[ht!]
\vspace{-0.5em}
\centering
\begin{subfigure}{0.49\linewidth}
\captionsetup{justification=centering, margin=1cm}
\includegraphics[width=0.9\textwidth]{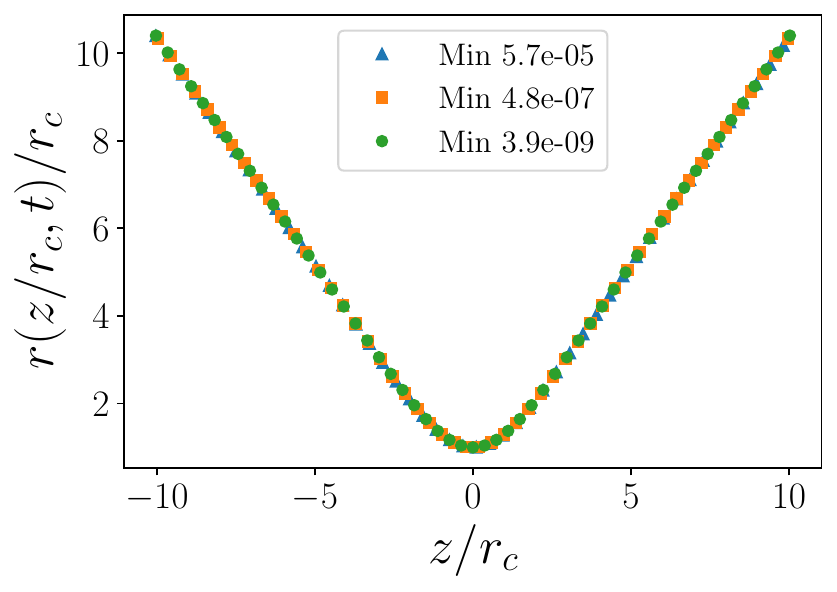}
\caption{RKL2 similarity solution}
\end{subfigure}
\begin{subfigure}{0.49\linewidth}
\captionsetup{justification=centering, margin=1cm}
\includegraphics[width=0.9\textwidth]{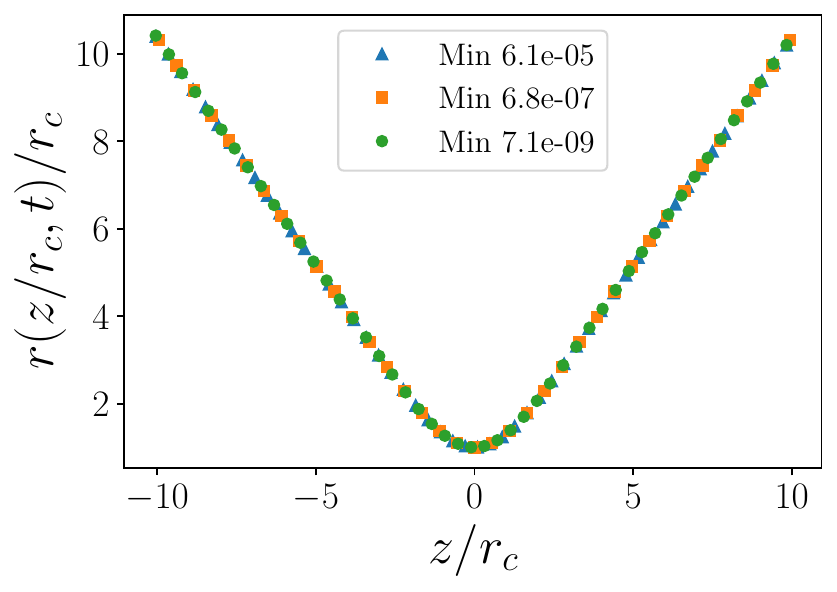}
\caption{RKG2 similarity solution}
\end{subfigure}
\vspace{-1em}
\caption{Rescaled solutions of the RKL2 and RKG2 surface diffusion equation simulations, with initial $dx=\frac{4\pi}{512}$. For each simulation, solutions with various minimum values ranging from $10^{-5}$ to $10^{-10}$ are selected; the horizontal axis is divided by the corresponding minimum function value, while the vertical axis is also rescaled by first using the modified horizontal axis and then divided by the corresponding minimum function value. As shown, the numerical solutions collapse into a single curve (not all grid points are shown in order to make all the time intervals visible).}
\label{fig:surfdiff_rescaling}
\end{figure}

To precisely measure the pinchoff behavior, we plot the cone slopes near the pinchoff center $z=0$ as shown in Fig.~\ref{fig:surfdiffangle}. The cone slopes are shown for the RKL2 simulations up to $10^{-8}$ and $10^{-10}$, respectively, with initial $dx=\frac{4\pi}{128}, \frac{4\pi}{256}, \frac{4\pi}{512}$. The orange line represents the expected cone slope in absolute value. As can be seen in the graphs, in all cases the simulated cone slopes match the expected pinchoff angle slope exactly as theoretically predicted, showing that the super-time-stepping methods are able to correctly and accurately capture the physical similarity solution as derived in~\cite{bernoff1998axisymmetric}. The rescaled solutions plotted in Fig.~\ref{fig:surfdiff_rescaling} further verify that the similarity solution is successfully simulated by the numerical solution; as shown in the figure, numerical simulations at various times with various minimum values are rescaled, and the rescaled solutions overlap with each other.

Another test is to verify that the speed of the numerical solution growth matches the predicted growth speed as indicated in Section~\ref{sec:pdeproperty}. Fig.~\ref{fig:surfdifftimederi} shows the $\log$-$\log$ relation between the absolute value of time derivative of the minimum value and the minimum value of $r$; the data points perfectly match the predicted relation, without any shifting in the $x$-interception or misalignment in the slope.

Table~\ref{table:surfdifftime} shows the run time of stabilized Runge--Kutta methods in comparison with the implicit backward Euler method. We use three initial spatial grid sizes $dx=\frac{4\pi}{128}, \frac{4\pi}{256}, \frac{4\pi}{512}$ and initial timestep $dt=10^{-5}$ in all cases. The same adaptive time-stepping and adaptive mesh refinement are used in all the experiments. We stop the simulation when the minimum value of the solutions reaches $10^{-10}$ for the RKL and RKG methods, and we terminate the simulation for the backward Euler when minimum reaches $2\times10^{-4}$ for time saving. The run time required for the explicit RKL and RKG methods are significantly less than the run time of the backward Euler method. 

\begin{figure}[ht!]
\vspace{-0.5em}
\centering
\begin{subfigure}{0.49\linewidth}
\captionsetup{justification=centering}
\includegraphics[width=0.9\textwidth]{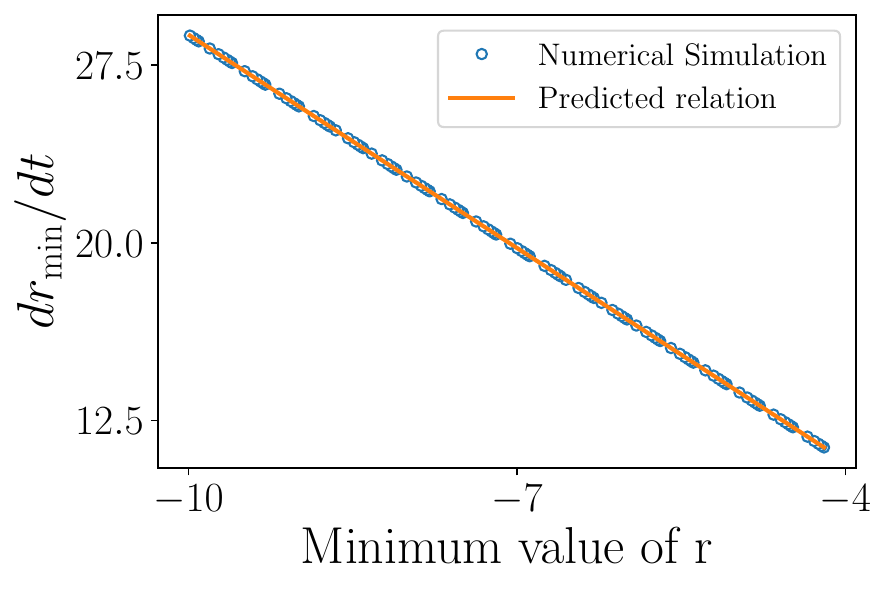}
\caption{RKL2 $\log$-$\log$ relationship between minimum value and time derivative}
\end{subfigure}
\begin{subfigure}{0.49\linewidth}
\captionsetup{justification=centering}
\includegraphics[width=0.9\textwidth]{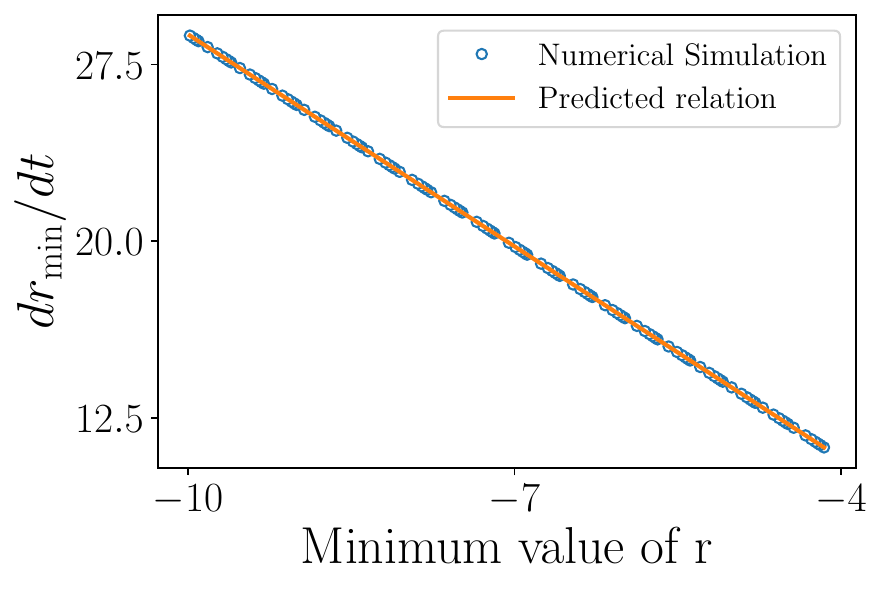}
\caption{RKG2 $\log$-$\log$ relationship between minimum value and time derivative}
\end{subfigure}
\vspace{-1em}
\caption{Log-log relationship between the absolute value of time derivative of the minimum value and the minimum value of $r$, surface diffusion equation. The time derivative is numerically calculated using the standard forward difference. According to the theoretical prediction, after taking the logarithm the absolute value of the time derivative has slope $-3$.}
\label{fig:surfdifftimederi}
\end{figure}

\begin{table}[!t]
    \begin{center}
    \begin{tabular}{|c|c|c|c|}
    \hline
    { }& \textit{dx=$\frac{4\pi}{128}$} & \textit{dx=$\frac{4\pi}{256}$} & \textit{dx=$\frac{4\pi}{512}$}\\
    \hline
    \textit{Implicit Euler} &  3608.68s & 8037.48s & 19576.61s\\
    \hline
    \textit{RKL2} & 47.28s & 63.51s & 127.88s\\
    \hline
    \textit{RKG2} & 83.69s & 96.79s & 154.20s\\
    \hline
    \textit{RKL1} & \textbf{24.41}s & \textbf{38.38}s & \textbf{45.35}s\\
    \hline
    \textit{RKG1} & 34.13s & 50.39s & 99.21s\\
    \hline
    \end{tabular}
    \end{center}
    
    \caption{Run time comparison between implicit Euler's method and the super-time-stepping explicit method for the surface diffusion equation. For each initial $dx$, the fastest method is highlighted. We stop the simulation when the solution blows up to $2\times10^{-4}$. All the super-time-stepping method simulations require significantly less computational time than the classical implicit Euler's method.}
    \label{table:surfdifftime}
\end{table}

\section{Proof of Monotone Stability}
\label{sec:monotoneproof}
Despite the success of super-time-stepping methods, 
few theoretical guarantees have been proved. In~\cite{RKL2} and~\cite{OSULLIVAN_RKG2019209}, the consistency and stability of both the RKL methods and the RKG methods have been established for an arbitrary number of stages $s$. 
However, until now rigorous monotone stability was still an open question. We prove this result for the (semi-)linear heat equation with boundaryless/periodic boundary conditions. 

A numerical method is called ``monotonically stable" if the for any two sets of initial data $u_0$ and $v_0$ with 
$$v_0(x)\ge u_0(x)\quad\forall x,$$
then the corresponding solutions satisfy
$$v(x, t)\ge u(x, t)\quad\forall x, t.$$ 
Monotone stability is a discrete analogy to the comparison principle of second-order parabolic equations, which may not satisfied by the fourth-order parabolic equation~\cite{bertozzi1995lubrication, Bertozzi1998THEMO}. Numerous physical systems and PDEs have the monotone property, for example the semilinear heat equation~\ref{eqn:semiheat}, and it is essential for numerical methods to capture such a property in order to simulate the corresponding systems realistically. 

The RKL methods (\cite{RKL2}) were primarily developed to address non-monotonicity issues that arise in the Runge-Kutta-Chebyshev super-time-stepping method. 
However, to date, people have only numerically verified the RKL monotone stability up to a specific number of stages $s$ ($s\le64$). Other investigations~\cite{DAWES2021104762, SKARAS_RKG2021109879} have numerically explored the monotone stability under different types of boundary conditions, but still without rigorous proof. Later, the RKG method was proposed in~\cite{SKARAS_RKG2021109879} to ensure monotone stability under the Dirichlet boundary condition, but still only numerical verifications were performed.

In this section, we present a proof of monotone stability for both the RKL and the RKG methods applied to the linear heat equation and semilinear heat equation, under no boundary conditions or equivalently periodic condition, within their numerical stability constraint.

\subsection{Preliminary Definitions and Setup}
For a semi-discretized PDE in the form of 
$$\frac{du}{dt}=Mu$$
where $M$ contains the spatial discretization, we write the explicit one-step numerical method in the form of 
$$u_i^{t+dt}=\mathcal{H}(u^t;j).$$
In RKL and RKG methods, $\mathcal{H}(u^t; j)$ involves multiple grid points around the $i$th spatial grid, and the number of points becomes larger as $s$ increases. Using this notation, monotone stability can be rigorously formulated below:
\begin{definition}
\label{def:monotonicity}
A numerical method is called ``monotonically stable", or simply ``monotone", if and only if 
\begin{equation}
\label{eqn:monotonicity}
\frac{\partial\mathcal{H}}{\partial u_j^t}(u^t;j)\ge0
\end{equation}    
for all $j$ in the grid points at timestep $t$. 
\end{definition}
More details and concrete examples can be found in~\cite{leveque, RKL2} and Appendix~\ref{app:monotone_example}. 

In the rest of the proof in this section, we exclusively focus on the boundaryless one-dimensional heat equation $u_t=\alpha u_{xx}$, numerically solved by RKL and RKG methods with the method of lines using the standard spatial central difference. More precisely, we use the RKL and RKG methods to solve for
\begin{equation}
\label{eqn:theoryheat}
    \frac{du}{dt}=\alpha Mu,\quad t\in(0, T]\quad \alpha\in\R;\quad u(0)=u_0,
\end{equation}
where $\alpha\in\R_+$, $u\in\R^\Z$, $M:\R^\Z\to\R^\Z$ is defined by $[Mu]_i:=\frac{[u]_{i+1}-2[u]_{i}+[u]_{i-1}}{dx^2}$, $[u]_i$ is the $i$-th entry of $u\in\R^\Z$, $i\in\Z$. We denote the CFL coefficient by $c:=\alpha\frac{dt}{dx^2}$, where $dt$ is the super timestep size. Notice that all the results below also work for the periodic \revision{and homogeneous Neumann boundary condition cases}, since the periodic boundary condition problem can be equivalently seen as a boundaryless problem with repeated values, \revision{and the homogeneous Neumann boundary condition problem can be seen as a periodic boundary condition problem with each period to be the original domain reflected and doubled}.

\subsection{Runge--Kutta--Legendre Methods Monotonicity}


The difficulty of the proof lies in controlling the Legendre polynomial and in the repetitive application of the central difference operator $M$ on the grid values. It is natural to start with an induction attempt given the recurrence relation of the Legendre polynomial as in the derivation of the RKL methods. However, for the RKL methods, there is barely any relationship between the $s$-stage method and the $(s+1)$-stage method. For example, for the RKL1 method, $P_{s+1}\Big(1+\frac{2}{(s+1)^2+(s+1)}x\Big)$ and $P_{s}\Big(1+\frac{2}{s^2+s}x\Big)$ cannot be directly linked by the Legendre polynomial recurrence relation given the different arguments in the Legendre polynomial. Another approach might be to write an RKL method as the composition of Euler's method with different timestep lengths~\cite{RKL2}, and try to prove the monotonicity of each Euler's method. Yet each of the internal Euler's step might not even be stable, thus leading to no hope of its monotone stability, which in general is a stronger property than the numerical stability. We also note that Shu-Osher~\cite{Shu-Osher} represented the classical RK methods in a similar formulation to the RKL and RKG method. They proposed a simple criterion for the RK method to be total-variation diminishing (TVD)~\cite{leveque}. However, with more careful observation, it can be seen that the Shu-Osher representation cannot be directly applied to the RKL or the RKG methods, since it requires the coefficients before the intermediate steps $Y_j$ in~\ref{eqn:RK-presentation} to be all non-negative, a property that is neither satisfied by the RKL nor the RKG method; moreover, the TVD property is a weaker condition than our desired monotone stability~\cite{leveque}.

Our proof utilizes a more direct approach, based on an alternating summation representation of the Legendre polynomial and several properties of the generalized hypergeometric functions. This approach directly leads to the result that the monotonicity conditions for both RKL1 and RKL2 methods are exactly the same as their stability constraints, respectively, for the constant coefficient heat equation and semilinear heat equation without boundary condition. 

\subsubsection{Combinatorics Formulation}

In this section, we formulate the crucial combinatorial statement, whose validity is equivalent to both the monotonicity of the RKL1 and the RKL2 methods.

\begin{lemma}
\label{lemma:rklcondition}
The \revision{$s$-stage} RKL1 method is monotone under its stability constraint
$$c\le\frac{s^2+s}{4}$$
when applied to problem~\ref{eqn:theoryheat} if and only if 
\begin{equation}
\label{eqn:combformula_rkl1}
\sum_{k=j}^s\binom{s}{k}\binom{s+k}{k}x(s)^k\binom{2k}{j+k}(-1)^{j+k}\ge0    
\end{equation}
holds for any $s\in\N$ and $0\le j\le s$, and $0\le x(s)\le\frac{1}{4}$.
\end{lemma}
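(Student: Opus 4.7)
The plan is to translate the monotonicity condition (Definition~\ref{def:monotonicity}) directly into a coefficient-extraction problem for the RKL1 stepping matrix. Applied to~\ref{eqn:theoryheat}, the one-step update from~\ref{eqn:RKL1} reads $u^{t+dt}=P_s\bigl(I+\tfrac{2c}{s^2+s}\,dx^2 M\bigr)u^t$ with $c=\alpha\,dt/dx^2$, so the condition $\partial u_i^{t+dt}/\partial u_{i+j}^t\ge 0$ is exactly non-negativity of the coefficient of $u_{i+j}^t$ in this expression. The task then is to evaluate this coefficient in closed form and match it with the alternating sum in~\ref{eqn:combformula_rkl1}.

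First I would invoke the standard shifted-Legendre expansion
$$P_s(1+y)=\sum_{k=0}^{s}\binom{s}{k}\binom{s+k}{k}\bigl(y/2\bigr)^k,$$
which expresses the RKL1 stepping operator as a polynomial in $dx^2 M$ whose $k$-th coefficient is $\binom{s}{k}\binom{s+k}{k}\,x(s)^k$ for $x(s):=c/(s^2+s)$. Under this substitution the stability bound $c\le(s^2+s)/4$ becomes exactly $x(s)\le 1/4$, matching the hypothesis of the lemma.

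Next I would handle the stencil expansion. Factoring $dx^2 M=(E^{1/2}-E^{-1/2})^2$ with $E$ the right-shift, the binomial theorem gives $(dx^2 M)^k=\sum_{l=0}^{2k}(-1)^l\binom{2k}{l}E^{k-l}$, so $(dx^2 M)^k u^t$ contributes to site $i$ a weight $(-1)^{k-j}\binom{2k}{k-j}$ on $u_{i+j}^t$ (via $j=k-l$), vanishing unless $k\ge|j|$. Combining with the Legendre weights and using $\binom{2k}{k-j}=\binom{2k}{k+j}$ together with $(-1)^{k-j}=(-1)^{k+j}$, I obtain
$$\frac{\partial u_i^{t+dt}}{\partial u_{i+j}^t}=\sum_{k=|j|}^{s}\binom{s}{k}\binom{s+k}{k}\,x(s)^k\,\binom{2k}{k+j}(-1)^{k+j},$$
which depends only on $|j|$. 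Non-negativity for every $j$ in the stencil is therefore equivalent to~\ref{eqn:combformula_rkl1} holding for $0\le j\le s$, establishing both directions of the lemma.

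The main obstacle is careful bookkeeping rather than any analytic difficulty: selecting the right explicit form of $P_s$ among several equivalent representations, and tracking signs and shifted indices cleanly through the binomial expansion of the central-difference operator. Once the factorization $dx^2 M=(E^{1/2}-E^{-1/2})^2$ is in hand, no estimates are required at this stage; the substantive combinatorial non-negativity of~\ref{eqn:combformula_rkl1} under $x(s)\le 1/4$ is the content of the subsequent sharp bound rather than of the present lemma.
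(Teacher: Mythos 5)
Your proof is correct and follows essentially the same route as the paper: expand $P_s$ in powers of $dx^2 M$ via the shifted-Legendre formula, compute the stencil weights of $M^k$ to get $(-1)^{k+j}\binom{2k}{k+j}$, swap the order of summation, and observe symmetry in $j$. The only cosmetic difference is that you obtain the stencil weights by factoring $dx^2M=(E^{1/2}-E^{-1/2})^2$ and applying the binomial theorem, whereas the paper derives the same coefficients by induction on $k$.
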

\begin{proof}
The Legendre polynomial of $s$-th order $P_s(x)$ can be written as~\cite{Koornwinder2013}
$$P_s(x)=\sum\limits_{k=0}^s\binom{s}{k}\binom{s+k}{k}\Big(\frac{x-1}{2}\Big)^k.$$
Plugging into the RKL1 method~\ref{eqn:RKL1}, we have that
$$u_i^{t+dt}=\sum_{k=0}^s\binom{s}{k}\binom{s+k}{k}\Big(\frac{1}{s^2+s}\Big)^kdt^k[M^ku^t]_i$$
where $[M^ku^t]_i$ denotes the $i$-th element of the vector $M^ku^t$.
Using induction, it can be shown that
$$[M^ku^t]_i=\frac{1}{dx^{2k}}\sum_{j=0}^{2k}\binom{2k}{j}(-1)^ju_{i+j-k}^t=\frac{1}{dx^{2k}}\sum_{j=-k}^{k}\binom{2k}{j+k}(-1)^{j+k}u_{i+j}^t.$$
so that
$$u_i^{t+dt}=\sum_{k=0}^n\binom{s}{k}\binom{s+k}{k}\Big(\frac{1}{s^2+s}\Big)^kc^k\sum_{j=-k}^{k}\binom{2k}{j+k}(-1)^{j+k}u_{i+j}^t.$$
By definition~\ref{eqn:monotonicity}, the monotonicity holds if and only if $\frac{\partial u_i^{t+dt}}{\partial u_j^{t}}\ge0$ for all grid indices $i, j$; a change of index enables us to focus on each grid index separately:
$$u_i^{t+dt}=\sum_{j=-s}^s\sum_{k=|j|}^s\binom{s}{k}\binom{s+k}{k}\Big(\frac{1}{s^2+s}\Big)^kc^k\binom{2k}{j+k}(-1)^{j+k}u_{i+j}^t.$$
By symmetry, only the cases where $j\ge0$ needs to be considered. More precisely, the $s$-stage RKL1 monotonicity is satisfied under its stability constraint if and only if
$$\sum_{k=j}^s\binom{s}{k}\binom{s+k}{k}\Big(\frac{1}{s^2+s}\Big)^kc^k\binom{2k}{j+k}(-1)^{j+k}\ge0$$
for any $s\in\N$, $0\le j\le s$ and $0\le c\le\frac{s^2+s}{4}$. This proves the lemma.
\end{proof}

\begin{lemma}
The \revision{$s$-stage} RKL2 method is monotone under its stability constraint
$$c\le\frac{s^2+s-2}{8}.$$
if and only if the \revision{$s$-stage} RKL1 method is monotone under its stability constraint 
$$c\le\frac{s^2+s}{4}.$$
\end{lemma}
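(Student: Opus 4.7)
The plan is to expand the RKL2 update \eqref{eqn:RKL2} using the same Legendre polynomial identity and central-difference stencil expansion as in the proof of Lemma~\ref{lemma:rklcondition}, and observe that the resulting spatial coefficients reduce, up to an additive shift at the diagonal and a common positive overall factor, to those of RKL1 after a linear rescaling of the CFL number.

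Concretely, I would apply $P_s(x)=\sum_{k=0}^s\binom{s}{k}\binom{s+k}{k}\bigl(\tfrac{x-1}{2}\bigr)^k$ to $P_s\bigl(I+\tfrac{4dt}{s^2+s-2}M\bigr)$ and use the stencil identity $[M^ku^t]_i = dx^{-2k}\sum_{j=-k}^k\binom{2k}{j+k}(-1)^{j+k}u_{i+j}^t$. Writing $\beta:=\tfrac{s^2+s-2}{2s(s+1)}$ and $y:=\tfrac{2c_2}{s^2+s-2}$ in terms of the RKL2 CFL parameter $c_2$, the coefficient of $u_{i+j}^t$ in the RKL2 update is
\begin{equation*}
a_j^{(2)}(c_2)=\delta_{j,0}(1-\beta)+\beta\sum_{k=|j|}^{s}\binom{s}{k}\binom{s+k}{k}\,y^k\binom{2k}{j+k}(-1)^{j+k}.
\end{equation*}
The substitution $c_2\mapsto y$ is a bijection from the RKL2 stability interval $[0,\tfrac{s^2+s-2}{8}]$ onto $[0,\tfrac14]$, matching the RKL1 substitution $c_1\mapsto x=\tfrac{c_1}{s^2+s}$ acting on its own stability interval $[0,\tfrac{s^2+s}{4}]$. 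Under this common parametrization the summation above coincides with the left-hand side of \eqref{eqn:combformula_rkl1}, so for $j\ne0$ we have $a_j^{(2)}(c_2)=\beta\,a_j^{(1)}(c_1)$ with $c_1=\tfrac{2(s^2+s)}{s^2+s-2}c_2$.

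From here the equivalence follows. For $j\ne0$ the positive multiplier $\beta$ makes the two monotonicity conditions pointwise identical. For $j=0$ the affine relation $a_0^{(2)}(c_2)=(1-\beta)+\beta\,a_0^{(1)}(c_1)$ shows that RKL1 monotonicity at the diagonal ($a_0^{(1)}\ge0$) immediately implies RKL2 monotonicity there ($a_0^{(2)}\ge1-\beta>0$). The main obstacle will be the reverse implication at $j=0$: a priori, RKL2 monotonicity only forces $a_0^{(1)}\ge -(1-\beta)/\beta=-\tfrac{s^2+s+2}{s^2+s-2}$, which is strictly weaker than $a_0^{(1)}\ge0$. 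I would close this gap by observing that both monotonicity conditions at $j=0$ are governed by the sign of the same scalar polynomial $S(y)=\sum_{k=0}^s\binom{s}{k}\binom{s+k}{k}\binom{2k}{k}(-y)^k$ on $y\in[0,\tfrac14]$, and the non-negativity of $S$ there is precisely the $j=0$ instance of the combinatorial statement \eqref{eqn:combformula_rkl1}, established in the subsequent subsections; once \eqref{eqn:combformula_rkl1} is in hand, both monotonicity properties hold and the iff follows.
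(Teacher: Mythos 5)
Your proposal takes essentially the same route as the paper: expand $P_s$ in the RKL2 update via $P_s(x)=\sum_k\binom{s}{k}\binom{s+k}{k}\bigl(\tfrac{x-1}{2}\bigr)^k$ together with the central-difference stencil identity, extract the factor $\beta=\tfrac{s^2+s-2}{2s(s+1)}$, observe that the CFL rescaling maps both stability intervals onto $y\in[0,\tfrac14]$, and then note $0<\beta<\tfrac12$ so that the nonnegativity of the RKL1 stencil coefficients passes to the RKL2 ones (with the extra $(1-\beta)\delta_{j,0}$ term only helping at the diagonal). The paper's proof is precisely this computation, written more tersely as equation~(\ref{eqn:combformula_rkl2}).

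One place where you are actually more careful than the paper is the converse direction at $j=0$. You correctly point out that $a_0^{(2)}\ge0$ only yields $a_0^{(1)}\ge -(1-\beta)/\beta$, which by itself is weaker than $a_0^{(1)}\ge0$; the paper's written proof simply asserts that~(\ref{eqn:combformula_rkl2}) follows from~(\ref{eqn:combformula_rkl1}) and never addresses the reverse implication at all, despite the lemma being phrased as an ``if and only if.'' Your proposed resolution --- that once Lemma~\ref{lemma:alterbinom} establishes~(\ref{eqn:combformula_rkl1}) unconditionally, both monotonicity statements hold and the biconditional is trivially true --- is in fact the only way the equivalence is actually delivered in the paper as a whole, even though the paper leaves this implicit. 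Note, however, that this makes the ``iff'' logically vacuous rather than a content-bearing equivalence, so if you wanted a genuine standalone biconditional you would still need an independent argument at $j=0$; as things stand both you and the paper only truly prove the direction (RKL1 monotone $\Rightarrow$ RKL2 monotone), which is the direction that the subsequent development actually uses.
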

\begin{proof}
Using the same derivation as in Lemma~\ref{lemma:rklcondition}, the RKL2 method is monotone under its stability constraint if and only if 
\begin{equation}
\label{eqn:combformula_rkl2}
\begin{cases}
1-\frac{s^2+s-2}{2s(s+1)}+\frac{s^2+s-2}{2s(s+1)}\sum_{k=j}^s\binom{s}{k}\binom{s+k}{k}x(s)^k\binom{2k}{j+k}(-1)^{j+k}\ge0 & j=0\\
\frac{s^2+s-2}{2s(s+1)}\sum_{k=j}^n\binom{s}{k}\binom{s+k}{k}x(s)^k\binom{2k}{j+k}(-1)^{j+k}\ge0 & j\neq0
\end{cases}
\end{equation}
holds for any $s\in\N$ and $0\le j\le s$, and $0\le x(s)\le\frac{1}{4}$. Clearly $f(s):=\frac{s^2+s-2}{2s(s+1)}$ is monotonically increasing and positive for $n\ge1$ by direct calculation; since $\underset{s\to\infty}{\lim}f(s)=\frac{1}{2}$, we can conclude that $0\le f(s)\le\frac{1}{2}$, so that equation~\ref{eqn:combformula_rkl2} holds given that equation~\ref{eqn:combformula_rkl1} holds. This concludes our proof.
\end{proof}

\subsubsection{Estimation of the Alternating Sum and Conclusion}

\begin{definition}[Generalized hypergeometric function~\cite{Duverney2024}]
A generalized hypergeometric function $\pFq{p}{q}{a_1,...,a_p}{b_1,...,b_q}{x}:\R^p\times\R^q\times\C\to\R$ is defined as
\begin{equation}
\label{eqn:hypergeo_def}
\pFq{p}{q}{a_1,...,a_p}{b_1,...,b_q}{x}:=\sum_{k=0}^\infty\frac{(a_1)_k...(a_p)_k}{(b_1)_k...(b_q)_k}\frac{x^k}{k!}
\end{equation}
where $\Gamma(x)$ is the gamma function, $(a)_k:=\frac{\Gamma(a+k)}{\Gamma(a)}$ is the Pochhammer symbol. 
\end{definition}

A straightforward observation is that we can simplify the Pochhammer symbol as $(a)_k:=a(a+1)\cdots(a+k-1)$ when $a\in\N$. The gamma function $\Gamma(x)$ used in the definition above is assumed with analytical continuation to the entire complex plane using the rule $\Gamma(x)=\frac{\Gamma(x+1)}{x}$, so that by classical complex analysis arguments it is a meromorphic function with isolated poles at the negative integers. Specifically, $\Gamma(x)$ is also well-defined for negative non-integer values.

The convergence of a hypergeometric function in special cases is important for the discussions below. Notice that by direct computation, when any of $a_1,...,a_p$ is a non-positive integer, the series degenerates into a finite sum, so that the hypergeometric function is always well-defined. Another observation is that when $p=q+1$, using the ratio test, the series is convergent for $|z|<1$ and divergent for $|z|>1$, so that we would only be able to guarantee the well-definedness for $|z|<1$.

Numerous transformations related to generalized hypergeometric functions have been explored intensively, including conditions that determine the non-negativity of them. One of the famous identities that is useful for our proof is as follows

\begin{theorem}[Clausen’s formula~\cite{Clausen}]
\label{thm:clausen}
For any $a, b\in\R$,
$$\pFq{3}{2}{2a, 2b, a+b}{a+b+1/2, 2a+2b}{x}=\Big(\pFq{2}{1}{a, b}{a+b+1/2}{x}\Big)^2.$$
holds for any $x\in\R$ when the generalized hypergeometric functions on both sides are convergent and well-defined.
\end{theorem}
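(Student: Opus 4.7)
The plan is to prove Clausen's identity by the classical symmetric-square-ODE method: show that both sides of the claimed equality satisfy the same third-order linear ordinary differential equation in $x$ and agree, together with two derivatives, at the regular singular point $x=0$, then invoke uniqueness of analytic solutions. Write $c := a+b+1/2$ and $y(x) := \pFq{2}{1}{a,b}{c}{x}$, so that $y(0)=1$ and $y$ satisfies the hypergeometric ODE
$$x(1-x)y'' + [c-(a+b+1)x]\,y' - ab\,y = 0.$$
Setting $Y(x) := y(x)^2$, I would then compare $Y$ to the function $\pFq{3}{2}{2a,2b,a+b}{c,2a+2b}{x}$, both of which are analytic near $x=0$ with value $1$ there.

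The first step is to derive a third-order linear ODE satisfied by $Y$. From $Y' = 2yy'$, $Y'' = 2(y')^2 + 2yy''$, and one further differentiation, the hypergeometric ODE lets one eliminate $y''$ and $y'''$; a symmetric elimination of the remaining occurrences of $y$ and $y'$ (via $Y$, $Y'$, $Y''$ themselves) then yields a third-order linear equation of the form
$$x^2(1-x)Y''' + x[\alpha_0 + \alpha_1 x]\,Y'' + [\beta_0 + \beta_1 x]\,Y' + \gamma\,Y = 0,$$
with coefficients depending only on $a$ and $b$. The second step is to verify that this is precisely the generalized hypergeometric ODE for $\pFq{3}{2}{2a,2b,a+b}{c,2a+2b}{x}$, which in operator form reads
$$\theta(\theta+c-1)(\theta+2a+2b-1)\,F = x(\theta+2a)(\theta+2b)(\theta+a+b)\,F, \qquad \theta := x\,\frac{d}{dx}.$$
Expanding $\theta$ and comparing coefficients of $Y$, $Y'$, $Y''$, $Y'''$ matches term-by-term with the ODE obtained for $Y$.

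The main obstacle is the explicit algebraic bookkeeping in Step~1: expressing the nonlinear relation $Y = y^2$ as a linear third-order equation in $Y$ alone requires the mixed monomials $yy'$, $(y')^2$, $yy''$ to collapse into polynomial-in-$x$ combinations of $Y$, $Y'$, $Y''$, which is possible only because the governing second-order operator has the very special ``Clausen form'' with middle lower parameter $a+b+1/2$. Once the common ODE is in place, matching initial data is straightforward: $Y(0)=1$ matches the constant term; $Y'(0)=2\,y(0)y'(0)=2ab/c$ matches the coefficient $(2a)(2b)(a+b)/[c\cdot 2(a+b)] = 2ab/c$ of $x$ on the left-hand side; and $Y''(0)$ agrees with the second coefficient of the ${}_3F_2$ by an analogous direct computation. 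Uniqueness of the analytic solution to the third-order ODE near the regular singular point $x=0$ then forces the two functions to coincide in a neighborhood of $0$, and analytic continuation extends the identity to the full common domain of convergence. A more combinatorial alternative would be to match Taylor coefficients directly, reducing Clausen's identity to the Cauchy-product formula
$$\sum_{k=0}^N \binom{N}{k}\,\frac{(a)_k(b)_k(a)_{N-k}(b)_{N-k}}{(c)_k(c)_{N-k}} = \frac{(2a)_N(2b)_N(a+b)_N}{(c)_N(2a+2b)_N},$$
which follows from a terminating balanced ${}_4F_3$ summation dispatched by the Pfaff--Saalsch\"utz theorem.
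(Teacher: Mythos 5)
The paper does not actually prove Clausen's formula; Theorem~\ref{thm:clausen} is quoted from the literature~\cite{Clausen} and invoked as a black box in the proof of Lemma~\ref{lemma:alterbinom}. So there is no in-paper argument to compare against, and only the soundness of your proposal can be assessed.

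Your primary route---the symmetric-square ODE argument---is the classical proof of Clausen's identity and is correct in outline. One clarification of your ``main obstacle'' description: the symmetric square of \emph{any} second-order linear operator $L$ is automatically a third-order linear operator whose solution space is spanned by $y_1^2, y_1y_2, y_2^2$ for a fundamental system $y_1, y_2$ of $L$; so passing from $Y = y^2$ to a third-order linear ODE in $Y$ alone does not in itself require the Clausen parameter constraint. What the choice $c=a+b+\frac{1}{2}$ is actually needed for is that the resulting symmetric square be again of \emph{generalized hypergeometric type}, i.e.\ that it match the $\theta$-operator equation for the ${}_3F_2$ with the stated upper and lower parameters. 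Once that is established, the initial condition $Y(0)=1$ already singles out the holomorphic solution at the regular singular point $x=0$ for generic parameters (the exponent-$0$ solution is one-dimensional), so verifying $Y'(0)$ and $Y''(0)$ as well is consistent but not strictly necessary. The matching of $Y'(0)=2ab/c$ with the linear coefficient of the ${}_3F_2$ in your write-up is correct.

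Your combinatorial alternative is sound in direction but imprecise in its last sentence. Rewriting the Cauchy-product coefficient as a terminating hypergeometric series at $1$ gives a ${}_4F_3$ with upper parameters $-N,\ a,\ b,\ \frac{1}{2}-a-b-N$ and lower parameters $a+b+\frac{1}{2},\ 1-a-N,\ 1-b-N$. A parameter check shows each upper/lower pair sums to $1-N$, so this series is well-poised; but the sum of lower parameters exceeds the sum of upper parameters by $2$, not $1$, so it is \emph{not} Saalsch\"utzian, and Pfaff--Saalsch\"utz does not apply directly. Closing that route would require a preliminary Thomae-type transformation to reach a Saalsch\"utzian ${}_3F_2$, or an appeal to a well-poised summation theorem (Whipple/Dougall type) rather than Pfaff--Saalsch\"utz.
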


In particular, it gives us a form of generalized hypergeometric functions that is always non-negative. Our goal is to write the alternating sum~\ref{eqn:combformula_rkl1} as a generalized hypergeometric function, with the hope that the coefficients are compatible with Clausen's formula and directly apply the theorem to conclude. Fortunately, this process indeed gives us the desired result using the calculations in the following lemma.


\begin{lemma}[Alternating sum estimation lemma]
\label{lemma:alterbinom}
For any $s\in\N$, $0\le j\le s$, and $x\equiv x(s)\in\R$ such that $0\le x\le\frac{1}{4}$,
$$\sum_{k=j}^s\binom{s}{k}\binom{s+k}{k}x^k\binom{2k}{j+k}(-1)^{j+k}\ge0.$$
\end{lemma}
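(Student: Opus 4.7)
The strategy is to recognize the alternating sum as a truncated generalized hypergeometric function of type ${}_3F_2$ whose parameters fit Clausen's formula (Theorem~\ref{thm:clausen}), thereby realizing it as the square of a ${}_2F_1$. First I would shift the summation index by setting $m = k - j$ and factoring $x^j$ outside, writing
\[
\sum_{k=j}^s\binom{s}{k}\binom{s+k}{k}\binom{2k}{j+k}(-1)^{j+k}x^k
= x^j\sum_{m=0}^{s-j}\binom{s}{m+j}\binom{s+m+j}{m+j}\binom{2m+2j}{m+2j}(-1)^{m}x^{m}.
\]
Then I would convert the three binomial coefficients to Pochhammer symbols using $\binom{s}{m+j}=(-1)^{m+j}(-s)_{m+j}/(m+j)!$, $\binom{s+m+j}{m+j}=(s+1)_{m+j}/(m+j)!$, and $\binom{2m+2j}{m+2j}=(2j+1)_{2m}/\bigl[(2j+1)_m\,m!\bigr]$, and apply the duplication identity $(2j+1)_{2m}=4^m(j+\tfrac{1}{2})_m(j+1)_m$. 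Collecting $m$-independent factors yields $\binom{s+j}{j}\binom{s}{j}$, while the remaining $m$-dependent ratios telescope into the ${}_3F_2$ coefficient. The result will be the clean identity
\[
\sum_{k=j}^s\binom{s}{k}\binom{s+k}{k}\binom{2k}{j+k}(-1)^{j+k}x^k
= x^j\binom{s+j}{j}\binom{s}{j}\,\pFq{3}{2}{j-s,\;s+j+1,\;j+\tfrac{1}{2}}{j+1,\;2j+1}{4x},
\]
which is a polynomial in $x$ of degree $s$ since $j-s\le 0$ makes the ${}_3F_2$ series terminate.

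Next I would match these parameters to Clausen's formula with $a=(j-s)/2$ and $b=(s+j+1)/2$. A direct check gives $2a=j-s$, $2b=s+j+1$, $a+b=j+\tfrac{1}{2}$, $a+b+\tfrac{1}{2}=j+1$, and $2a+2b=2j+1$, so every parameter lines up. Applying Theorem~\ref{thm:clausen} then yields
\[
\sum_{k=j}^s\binom{s}{k}\binom{s+k}{k}\binom{2k}{j+k}(-1)^{j+k}x^k
= x^j\binom{s+j}{j}\binom{s}{j}\,\Big(\pFq{2}{1}{(j-s)/2,\;(s+j+1)/2}{j+1}{4x}\Big)^{2}.
\]
Since the left-hand side is a polynomial in $x$ of degree $s$, and since all parameters of the ${}_2F_1$ are real, the right-hand side is well defined as a real-valued quantity on the interval $0\le x\le\tfrac{1}{4}$: when $s-j$ is even the ${}_2F_1$ terminates as a polynomial, while when $s-j$ is odd the Gauss convergence criterion $c-a-b=j+1-(j-s)/2-(s+j+1)/2=\tfrac{1}{2}>0$ guarantees absolute convergence on the closed disk $|4x|\le 1$ and the polynomial identity extends by continuity. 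Non-negativity then follows at once: $x^j\ge 0$, both binomial coefficients are non-negative, and a real number squared is non-negative.

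The main obstacle I anticipate is the bookkeeping in the second paragraph — turning three binomial coefficients with shifted arguments into a clean ratio of Pochhammer symbols in the correct ${}_3F_2$ form — in particular correctly applying the duplication identity $(2j+1)_{2m}=4^m(j+\tfrac{1}{2})_m(j+1)_m$ to extract the $4^m$ factor that produces the argument $4x$. Once this algebraic reduction is complete, the rest is an immediate appeal to Clausen's identity and a routine convergence check at the boundary $x=\tfrac{1}{4}$.
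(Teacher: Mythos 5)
Your proof is correct and follows essentially the same path as the paper: shift the index, convert the three binomial coefficients into Pochhammer ratios (using the duplication identity $(2j+1)_{2m}=4^m(j+\tfrac12)_m(j+1)_m$, which the paper instead verifies by a short induction) to obtain a terminating ${}_3F_2$, match parameters to Clausen's formula with $a=(j-s)/2$, $b=(s+j+1)/2$, and conclude from the resulting perfect square, extending to $x=\tfrac14$ by a convergence/continuity argument. One small point in your favor: after the shift $k\mapsto k+j$ the factor $x^k=x^j x^{k-j}$ really does leave an overall $x^j$ in front of the ${}_3F_2$, as you have; the paper's displayed identity silently drops this $x^j$, which is a harmless slip for the non-negativity conclusion (since $x^j\ge 0$) but your version of the identity is the one that actually holds.
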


A detailed proof is in Appendix~\ref{app:sum_lemma}. 

With the above lemma, our main theorem immediately follows using the condition in the Lemma~\ref{lemma:rklcondition}.

\begin{theorem}[Monotonicity of RKL1 and RKL2, heat equation]
\\
(a) The \revision{$s$-stage} RKL1 method~\ref{eqn:RKL1} applied to constant coefficient heat equation under periodic \revision{or homogeneous Neumann boundary condition} is monotone if and only if
$$\alpha\frac{dt}{dx^2}\le\frac{s^2+s}{4}.$$
(b) The \revision{$s$-stage} RKL2 method~\ref{eqn:RKL2} applied to constant coefficient heat equation under periodic \revision{or homogeneous Neumann boundary condition} is monotone if and only if
$$\alpha\frac{dt}{dx^2}\le\frac{s^2+s-2}{8}.$$
Note that both constraints above are the same as the linear stability constraints.
\end{theorem}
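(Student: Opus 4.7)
The plan is to assemble the theorem directly from the three preceding lemmas, supplemented by a short Fourier argument for the converse direction, since the lemmas together give only the sufficient half of the biconditional.

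For the sufficiency (``if'') direction I would chain Lemma~\ref{lemma:rklcondition}, the subsequent RKL2 reduction lemma, and Lemma~\ref{lemma:alterbinom}. The first lemma rewrites the coefficient of $u_{i+j}^t$ in $u_i^{t+dt}$ under the $s$-stage RKL1 scheme as the alternating sum $\sum_{k=j}^s \binom{s}{k}\binom{s+k}{k}\, x(s)^k \binom{2k}{j+k}(-1)^{j+k}$ with $x(s) := c/(s^2+s)$; this reduction is purely algebraic, so the correspondence ``monotone $\Leftrightarrow$ every such sum is non-negative'' holds throughout the admissible parameter range. The RKL2 lemma then shows that, because $f(s) := (s^2+s-2)/(2s(s+1))$ lies in $[0, 1/2]$, the RKL2 monotonicity inequalities under $c \le (s^2+s-2)/8$ collapse to exactly the same family of combinatorial inequalities, as encoded in equations~\ref{eqn:combformula_rkl1} and~\ref{eqn:combformula_rkl2}. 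Lemma~\ref{lemma:alterbinom} supplies the non-negativity of these sums for every $0 \le x(s) \le 1/4$, so both forward implications follow at once.

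For the necessity (``only if'') direction I would argue via linear stability. A monotone consistent one-step scheme for $u_t = \alpha u_{xx}$ has non-negative stencil coefficients that, by preservation of constant states, sum to $1$; its stencil kernel is therefore a probability measure and is $\ell^\infty$-non-expansive. In the periodic (or equivalently infinite-grid) setting the scheme diagonalizes under the discrete Fourier transform, and $\ell^\infty$-non-expansiveness forces the Fourier symbol to satisfy $|\widehat{R_s}(\xi)| \le 1$ at every frequency $\xi$. For RKL1 the symbol equals $P_s\!\bigl(1 - 4c(1-\cos\xi)/(s^2+s)\bigr)$; evaluating at $\xi = \pi$ gives $\bigl|P_s\bigl(1 - 8c/(s^2+s)\bigr)\bigr| \le 1$, and since $|P_s(y)| > 1$ for every $y < -1$, we must have $1 - 8c/(s^2+s) \ge -1$, i.e., $c \le (s^2+s)/4$. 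The RKL2 case is identical after substituting $(s^2+s)$ by $(s^2+s-2)$ and absorbing the affine shift by $1-f(s)$, which does not affect the obstruction at $\xi = \pi$.

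The only subtlety I anticipate is the bookkeeping that justifies reading Lemma~\ref{lemma:rklcondition} outside the stated CFL range (which is needed to close the biconditional cleanly) and checking that the convex-combination weight in the RKL2 formula does not mask sign changes at the $j = 0$ entry. Both follow directly from the explicit coefficient expressions in equations~\ref{eqn:combformula_rkl1} and~\ref{eqn:combformula_rkl2}, after which the three lemmas combine without further effort to produce both parts (a) and (b) of the theorem.
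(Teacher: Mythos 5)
Your sufficiency argument is exactly the paper's: Lemma~\ref{lemma:rklcondition} reduces RKL1 monotonicity on the CFL range to the non-negativity of the alternating sums, the RKL2 lemma collapses the RKL2 inequalities to the same family, and Lemma~\ref{lemma:alterbinom} supplies the non-negativity. You have also correctly noticed something the paper glosses over: the proof text for the theorem is a single sentence that chains the lemmas, and those lemmas only assert the ``if'' half. Your instinct to add a converse argument is therefore well placed (the remark at the end of Appendix~\ref{app:sum_lemma} only explains why the \emph{proof technique} breaks for $x>1/4$, which is not a proof of necessity).

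However, your Fourier argument for the ``only if'' half is correct only for RKL1, and breaks for RKL2 when $s$ is odd. You assert that the affine shift by $1-f(s)$ ``does not affect the obstruction at $\xi=\pi$,'' but it does: for RKL2 the symbol at $\xi=\pi$ is
$\widehat{R_s}(\pi) = \bigl(1-f(s)\bigr) + f(s)\,P_s\!\bigl(1 - 16c/(s^2+s-2)\bigr)$,
and at the threshold $c=(s^2+s-2)/8$ the argument of $P_s$ is exactly $-1$, so $\widehat{R_s}(\pi)=\bigl(1-f(s)\bigr)+f(s)(-1)^s$. For odd $s$ this equals $1-2f(s)\in(0,1)$ (since $0<f(s)<1/2$), strictly in the interior of the unit disk; pushing $c$ slightly past the threshold sends $P_s$ below $-1$ but does not immediately force $|\widehat{R_s}(\pi)|>1$. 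Thus $\ell^\infty$-non-expansiveness is \emph{not} violated at the threshold, and your chain ``monotone $\Rightarrow\ell^\infty$-stable $\Rightarrow c\le(s^2+s-2)/8$'' has a gap. A clean repair is elementary and bypasses Fourier entirely: the $j=s-1$ stencil coefficient, read off from the combinatorial formula in Lemma~\ref{lemma:rklcondition} (and multiplied by $f(s)>0$ in the RKL2 case, $s\ge2$), is
$s\binom{2s-1}{s-1}x^{s-1}(1-4x)$,
which vanishes exactly at $x=1/4$ and is strictly negative for $x>1/4$. This gives the ``only if'' direction for both RKL1 and RKL2 in one line, without any recourse to the symbol.
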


A direct generalization of the above result leads to the monotonicity of RKL methods when applied to the semilinear heat equation $u_t=\alpha u_{xx}+u^p$, under the boundaryless/periodic boundary condition case, solved using the Strang splitting.

\begin{corollary}[Monotonicity of RKL1 and RKL2, semilinear heat equation]
(a) The \revision{$s$-stage} RKL1 method~\ref{eqn:RKL1} applied to constant coefficient semilinear heat equation with Strang splitting is monotone if and only if it satisfies the stability constraint
$$\alpha\frac{dt}{dx^2}\le\frac{s^2+s}{4},$$
under periodic \revision{or homogeneous Neumann boundary condition}.
\\
(b) The \revision{$s$-stage} RKL2 method~\ref{eqn:RKL2} applied to constant coefficient semilinear heat equation with Strang splitting is monotone if and only if it satisfies the stability constraint
$$\alpha\frac{dt}{dx^2}\le\frac{s^2+s-2}{8},$$
under periodic \revision{or homogeneous Neumann boundary condition}.
\end{corollary}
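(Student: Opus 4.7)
The plan is to reduce the corollary to the preceding theorem by exploiting the multiplicative structure of Strang splitting. One step of the scheme applied to $u_t = \alpha u_{xx} + u^p$ factors as $u^{n+1} = S_R(dt/2) \circ S_D(dt) \circ S_R(dt/2)\, u^n$, where $S_D(dt)$ denotes one $s$-stage RKL step applied to the discrete Laplacian and $S_R(\tau)$ denotes the exact flow of the componentwise ODE $v' = v^p$ over time $\tau$. Since Definition~\ref{def:monotonicity} asks precisely that every entry of the Jacobian $\partial \mathcal{H}/\partial u_j^t$ be non-negative, and a product of matrices with non-negative entries again has non-negative entries (via the chain rule $J_{\text{Strang}} = J_{R} J_{D} J_{R}$), it suffices to check that each sub-step has a non-negative Jacobian on the non-negative cone $\R_{\geq 0}^{\Z}$.

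First I would handle the reaction step. Because $v' = v^p$ acts componentwise, its Jacobian is diagonal, and for each $v_0 > 0$ the exact solution $v(\tau) = (v_0^{1-p} - (p-1)\tau)^{-1/(p-1)}$ (taken up to blow-up) yields $\partial v/\partial v_0 = (v/v_0)^p > 0$ by direct differentiation, extending continuously to $\partial v/\partial v_0 \to 1$ as $v_0 \to 0^+$ via the one-sided expansion $v(\tau) = v_0 + \tau v_0^p + O(v_0^{2p-1})$. In particular $S_R(\tau)$ preserves the non-negative cone and is monotone on it. Second, under the stated CFL constraint the diffusion sub-step $S_D(dt)$ is monotone by the preceding theorem, directly for RKL1 with $\alpha\,dt/dx^2 \le (s^2+s)/4$ and for RKL2 with $\alpha\,dt/dx^2 \le (s^2+s-2)/8$. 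Composing three maps with entrywise non-negative Jacobians yields the ``if'' direction.

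For the ``only if'' direction I would linearize the Strang step at the constant state $u \equiv 0$. From the limit above, the reaction Jacobian at $v_0 = 0$ equals $1$, so the Jacobian of the composite step at $u = 0$ coincides with $J_D$, the (state-independent, since $M$ is linear) Jacobian of one pure-diffusion RKL step. If the CFL constraint is violated, the preceding theorem guarantees an index pair $(i,j)$ with $[J_D]_{ij} < 0$; continuity then forces the same sign in a small neighborhood of $0$ inside the non-negative cone, violating \eqref{eqn:monotonicity}. The main (essentially only) obstacle I foresee is the mild degeneracy of $S_R$ at the boundary $v_0 = 0$ of the non-negative cone, which must be addressed so that the linearization at $u = 0$ is legitimate; the one-sided series expansion above resolves this by showing that, restricted to $\R_{\geq 0}$, the map $v_0 \mapsto v(\tau)$ is $C^1$ at the origin with derivative $1$.
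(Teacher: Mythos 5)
Your argument is correct and takes essentially the same route as the paper: verify that the exact reaction flow $v_0\mapsto\bigl(v_0^{1-p}-(p-1)\tau\bigr)^{-1/(p-1)}$ has a non-negative (diagonal) Jacobian, then combine with the preceding theorem on the diffusion sub-step via the Strang factorization. You are more careful than the paper's terse proof, in particular by explicitly writing the composite Jacobian as $J_R J_D J_R$, by treating the one-sided degeneracy at $v_0=0$, and by supplying the ``only if'' direction through linearization at the state $u\equiv 0$, all of which the paper leaves implicit.
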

\begin{proof}
Notice that the analytical solution of
$$u'(t)=u^p, u(t_0)=u_0$$
is $u(t)=\Big[t(1-p)+u_0^{1-p}-t_0(1-p)\Big]^{\frac{1}{1-p}}$, and for any fixed $t, p, t_0$ is clearly monotone in $u_0$ when $u_0$ is large. Along with the theorem above, the desired result follows.
\end{proof}

\subsection{Runge--Kutta--Gegenbauer Methods Monotonicity}
With the above RKL monotonicity results, it is natural to extend the proof procedure to the RKG methods, with the hope that some existing hypergeometric series transformations and non-negativity theorems may lead to the desired result. However, the same direct computation by replacing $P_s(z)$ with $C_s^{(3/2)}(z)$ in the above leads to the hypergeometric series $C\pFq{3}{2}{s+j+3, j+\frac{1}{2}, j-s}{2j+1, j+2}{4x}$ with some constant $C\equiv C(s, j)$, which to our best knowledge does not align with the known hypergeometric series non-negativity identities such as the Clausen's identity in Theorem~\ref{thm:clausen} or any of the Askey-Gasper type inequalities~\cite{AskeyGasper, Gasper1977, Askey1974, 10.2748/tmj/1178241523}. Fortunately, a recurrence relationship between Gegenbauer polynomials $C_s^{(\lambda)}$ with different $\lambda$ can directly lead to the monotonicity for RKG1, with the fact that $C_s^{(1/2)}(z)=P_s(z)$ and that RKL methods are monotone. A similar argument as in the RKL methods case can directly let us conclude the RKG2 monotonicity from the RKG1 monotonicity.

The proof of the RKG1 monotonicity can be summarized as the following theorem:
\begin{theorem}
The \revision{$s$-stage} RKG1 method~\ref{eqn:RKG1} applied to constant coefficient heat equation under periodic \revision{or homogeneous Neumann boundary condition} is monotone if it satisfies its stability constraint
$$\alpha\frac{dt}{dx^2}\le\frac{s^2+3s}{8}.$$
\end{theorem}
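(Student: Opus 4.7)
The strategy is to reduce the $s$-stage RKG1 monotonicity to the RKL1 monotonicity established earlier, by expressing $C_s^{(3/2)}$ as a non-negative combination of Legendre polynomials $P_{s-2k}$ whose arguments all sit inside the RKL1 stability/monotonicity region. The starting point is Bonnet's differential identity $(2n+1)P_n(x) = P_{n+1}'(x) - P_{n-1}'(x)$ combined with the Gegenbauer differentiation rule $\frac{d}{dx} C_n^{(\lambda)}(x) = 2\lambda\, C_{n-1}^{(\lambda+1)}(x)$ specialized at $\lambda = 1/2$, which gives $P_{n+1}'(x) = C_n^{(3/2)}(x)$. Together these yield the two-step recurrence $C_s^{(3/2)}(x) - C_{s-2}^{(3/2)}(x) = (2s+1)P_s(x)$, and telescoping down to the base cases $C_0^{(3/2)} = P_0 = 1$ and $C_1^{(3/2)} = 3 P_1$ (treating even and odd $s$ separately) produces the key connection identity
\begin{equation}
\label{eqn:gegLegconnect}
C_s^{(3/2)}(x) \;=\; \sum_{k=0}^{\lfloor s/2\rfloor}(2s-4k+1)\, P_{s-2k}(x).
\end{equation}

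\textbf{Reduction to RKL1 steps.} Substituting \ref{eqn:gegLegconnect} into \ref{eqn:RKG1}, I would write
\begin{equation*}
u^{t+dt} \;=\; \frac{2}{(s+1)(s+2)}\sum_{k=0}^{\lfloor s/2\rfloor}(2s-4k+1)\, P_{n_k}\!\Big(I+\frac{4\,dt}{s^2+3s}M\Big)u^t, \qquad n_k := s-2k.
\end{equation*}
The argument can be recast as $I + \frac{2\,dt_k}{n_k^2+n_k}M$ by setting $dt_k := \frac{2(n_k^2+n_k)}{s^2+3s}\,dt$, so each summand with $n_k \ge 1$ is exactly an $n_k$-stage RKL1 step with effective timestep $dt_k$ (the $n_k = 0$ summand is just the identity and is trivially monotone). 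Its effective CFL $c_k = \alpha\, dt_k/dx^2 = \frac{2(n_k^2+n_k)}{s^2+3s}\, c$ satisfies the RKL1 monotonicity bound $c_k \le (n_k^2+n_k)/4$ precisely when $c \le (s^2+3s)/8$; the factor $n_k^2+n_k$ cancels, so a single CFL controls every summand, and it coincides with the RKG1 linear stability bound in the statement.

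\textbf{Conclusion and main obstacle.} Because the prefactor $2/[(s+1)(s+2)]$ and every connection weight $2s-4k+1$ are strictly positive, the RKG1 update is a non-negative linear combination of monotone RKL1 updates, so the grid-wise coefficients $\partial u_i^{t+dt}/\partial u_j^t$ are sums of non-negative quantities and the scheme is monotone in the sense of Definition~\ref{def:monotonicity}. The one delicate step is establishing \ref{eqn:gegLegconnect}: the telescoping is routine, but one has to verify the base cases and the two parities correctly. An independent cross-check is available by expanding both sides via the series definition \ref{eqn:hypergeo_def} and comparing leading coefficients, which confirms the weights $2s-4k+1$. Everything else then follows mechanically from the RKL1 monotonicity theorem together with the CFL cancellation above.
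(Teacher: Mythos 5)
Your proof is correct, and it takes a genuinely different route from the one in the paper. The paper descends in the Gegenbauer index $\lambda$ via the contiguity relation
\[
C_s^{(\lambda)}(z)\;=\;zC_{s-1}^{(\lambda)}(z)+\frac{2\lambda+s-2}{2\lambda-2}\,C_s^{(\lambda-1)}(z),
\]
specialized to $\lambda=3/2$ so that the last term becomes $(s+1)P_s(z)$, and then runs an induction on $s$: the inductive hypothesis handles $C_{s-1}^{(3/2)}$, the RKL1 monotonicity handles $P_s$, and the factor $z=I+x\widetilde{M}$ is itself a non-negative stencil once $0\le x\le 1/2$. You instead invoke the classical connection coefficient expansion $C_s^{(3/2)}(x)=\sum_{k=0}^{\lfloor s/2\rfloor}(2s-4k+1)P_{s-2k}(x)$ (which you derive from Bonnet's identity together with $P_{n+1}'=C_n^{(3/2)}$; the formula also matches the standard Gegenbauer-in-Legendre connection with $\lambda=3/2$), which gives a one-shot, induction-free decomposition of the RKG1 step into a strictly positive combination of RKL1 steps, and the CFL bookkeeping cancels cleanly so that all summands become monotone at exactly the RKG1 stability threshold $c\le(s^2+3s)/8$. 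Your route dispenses with the extra step of checking that multiplication by $I+x\widetilde{M}$ preserves non-negativity, and it exposes explicitly which Legendre stages are being averaged; the paper's route is arguably easier to generalize to half-integer $\lambda=k/2$ (as noted in its remark), though one could also reach those cases from the general Legendre connection coefficients. Both proofs correctly reduce RKG1 monotonicity to the RKL1 monotonicity lemma, so the key input is identical.
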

\begin{proof}
By the recurrence relationship of the Gegenbauer polynomials (which can be verified using direct computations),
\begin{equation}
\label{eqn:gegenrelation}
C_s^{(\lambda)}(z)=zC_{s-1}^{(\lambda)}(z)+\frac{2\lambda+s-2}{2\lambda-2}C_s^{(\lambda-1)}(z).
\end{equation}
Also notice that $C_s^{(1/2)}(z)=P_s(z)$ is exactly the $s$-th degree Legendre polynomial. The $s$-stage RKL1 monotonicity is equivalent to the statement that $\Big[P_s\Big(I+x\Tilde{M}\Big)u(t)\Big]_i$ is a non-negative finite linear combination of $[u]_j$'s, where $[\Tilde{M}u]_i:=[u]_{i+1}-2[u]_{i}+[u]_{i-1}$, under the condition $0\le x\le\frac{1}{2}$.

Now we will show that the RKG1 method, with $\lambda=\frac{3}{2}$ as in the original RKG paper~\cite{SKARAS_RKG2021109879}, is monotonically stable using induction. $n=0$ and $n=1$ cases are trivial via direct computation. Now suppose the result is true for some $n-1\in\N$; then for the $n$ case,
$$C_s^{(\lambda)}(I+x\Tilde{M})=(I+x\Tilde{M})C_{s-1}^{(\lambda)}(I+x\Tilde{M})+(s+1)C_s^{(\lambda-1)}(I+x\Tilde{M}).$$
Each entry of $(s+1)C_s^{(\lambda-1)}(I+x\Tilde{M})u(t)$ is a non-negative finite linear combination of $[u]_j$'s by the RKL1 monotonicity; by the inductive hypothesis, each entry of $C_{s-1}^{(\lambda)}(I+x\Tilde{M})u(t)$ is also a non-negative finite linear combination of $[u]_j$'s. Finally, for $0\le x\le\frac{1}{2}$, each entry of $(I+x\Tilde{M})v$ is again a non-negative finite linear combination of $[v]_j$'s for any $v\in\R^\Z$; therefore, each entry of $(I+x\Tilde{M})C_{s-1}^{(\lambda)}(I+x\Tilde{M})u(t)$ is a non-negative finite linear combination of $[u]_j$'s. Taking the summation of two terms, each entry of $C_s^{(\lambda)}(I+x\Tilde{M})u$ is a non-negative finite linear combination of $[u]_j$'s; hence RKG1 is monotone for the desired CFL condition.
\end{proof}
\begin{remark}The above proof can be very easily generalized to show that RKG1 methods are monotone for $\lambda=\frac{k}{2}$ cases where $k\in\N$, using exactly the same argument as above.
\end{remark}

\begin{corollary}
The \revision{$s$-stage} RKG2 method~\ref{eqn:RKG2} applied to constant coefficient heat equation under periodic \revision{or homogeneous Neumann boundary condition} is monotone if it satisfies its stability constraint
$$\alpha\frac{dt}{dx^2}\le\frac{(s+4)(s-1)}{12}.$$
\end{corollary}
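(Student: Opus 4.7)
The plan is to mirror the reduction from RKL1 monotonicity to RKL2 monotonicity, now using the just-proved RKG1 theorem as the main input. I would begin by rewriting the RKG2 update as
\begin{equation*}
u^{t+dt} = A_s\, u^t + B_s\, C_s^{(3/2)}\!\left(I + y_s \tilde{M}\right)u^t,
\end{equation*}
where $A_s := 1 - \frac{2(s-1)(s+4)}{3s(s+3)}$, $B_s := \frac{4(s-1)(s+4)}{3s(s+1)(s+2)(s+3)}$, $y_s := \frac{6\,\alpha\,dt}{(s+4)(s-1)\,dx^2}$, and $\tilde{M}$ is the rescaled three-point stencil used in the RKL proofs. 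Monotonicity then reduces to checking that every entry of the right-hand side is a non-negative linear combination of entries of $u^t$.

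The first step is to confirm that both scalar weights are non-negative. $B_s > 0$ for $s \ge 2$ is immediate, and a short algebraic rewrite $\frac{2(s-1)(s+4)}{3s(s+3)} = \frac{2}{3}\left(1 - \frac{4}{s^2+3s}\right)$ yields $A_s > \frac{1}{3}$ uniformly in $s \ge 2$. The second step is to note that the stability bound $\alpha\,dt/dx^2 \le (s+4)(s-1)/12$ translates algebraically into exactly $y_s \le \frac{1}{2}$. After these two bookkeeping checks, the claim reduces to the single statement that $C_s^{(3/2)}\!\left(I + y\tilde{M}\right)$ acts on any vector as a non-negative combination of its entries for every $y \in [0, 1/2]$.

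This last statement is precisely the content of the inductive argument inside the RKG1 monotonicity theorem. That induction uses the Gegenbauer recurrence $C_s^{(3/2)}(z) = z C_{s-1}^{(3/2)}(z) + (s+1)P_s(z)$, the already-established RKL1 monotonicity (to handle the $P_s$ factor), and the elementary three-point non-negativity of $(I + y\tilde{M})v$ for $y \in [0,1/2]$. None of these ingredients depends on the specific RKG1 CFL value, so the induction produces non-negativity uniformly on the full interval $y \in [0, 1/2]$. The only real obstacle is this uniformity observation: once one verifies that the RKG1 induction does not secretly exploit the particular RKG1 CFL number but only the bound $y \le 1/2$, the corollary follows by combining the three ingredients above.
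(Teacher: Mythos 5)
Your proof is correct and follows the same route the paper takes: decompose the RKG2 update into the identity part and the Gegenbauer part, check that both scalar weights are non-negative (the paper phrases your $A_s>\tfrac13$ observation as "$\frac{2(s-1)(s+4)}{3s(s+3)}$ is monotonically increasing with limit $\tfrac23<1$"), and invoke the RKG1 theorem on the Gegenbauer factor. You have simply made explicit what the paper leaves implicit in "the same argument of extending RKL1 to RKL2 applies" — namely, that the RKG2 stability constraint is algebraically equivalent to $y_s\le\tfrac12$ and that the RKG1 induction establishes non-negativity of $C_s^{(3/2)}(I+y\tilde{M})$ uniformly for $y\in[0,\tfrac12]$ rather than only at the RKG1-specific CFL value.
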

\begin{proof}
Notice that $\frac{2(s-1)(s+4)}{3s(s+3)}$ is monotonically increasing and $\underset{s\to\infty}{\lim}\frac{2(s-1)(s+4)}{3s(s+3)}=\frac{2}{3}<1$. Thus, the same argument of extending RKL1 monotonicity to RKL2 monotonicity can be applied here.
\end{proof}

Finally, we can utilize the argument as in the RKL case to conclude that same for the semilinear heat equations as a corollary:
\begin{corollary}[Monotonicity of RKG1 and RKG2, semilinear heat equation]
(a) The \revision{$s$-stage} RKG1 method~\ref{eqn:RKG1} applied to constant coefficient semilinear heat equation with Strang splitting is monotone if it satisfies its stability constraint
$$\alpha\frac{dt}{dx^2}\le\frac{s^2+3s}{8},$$
under periodic \revision{or Neumann boundary condition}.
\\
(b) The \revision{$s$-stage} RKG2 method~\ref{eqn:RKG2} applied to constant coefficient semilinear heat equation with Strang splitting is monotone if it satisfies its stability constraint
$$\alpha\frac{dt}{dx^2}\le\frac{(s+4)(s-1)}{12},$$
under periodic \revision{or Neumann boundary condition}.
\end{corollary}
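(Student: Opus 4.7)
The plan is to mimic exactly the argument that derived the RKL semilinear corollary from the RKL heat equation theorem, treating the Strang splitting factor by factor. One full step of the scheme applied to $u_t=\alpha u_{xx}+u^p$ is a composition of three maps: a half-step flow of the reaction ODE $v'=v^p$, a full step of the diffusion $w_t=\alpha w_{xx}$ computed by the RKG1 (resp.\ RKG2) update, and another half-step flow of $v'=v^p$. Since the composition of monotone maps is monotone in the sense of Definition 5.1 (partial derivatives multiply pointwise by the chain rule and remain nonnegative), it suffices to check each factor separately.

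The middle factor is precisely the RKG scheme applied to the constant coefficient heat equation under periodic or homogeneous Neumann boundary conditions, and the two preceding results (the RKG1 theorem and the RKG2 corollary) already give monotonicity under the CFL constraints $\alpha\,dt/dx^2\le (s^2+3s)/8$ and $\alpha\,dt/dx^2\le (s+4)(s-1)/12$, respectively. The outer reaction factors are pointwise in the grid index, so their contribution to $\partial \mathcal H/\partial u_j^t$ is diagonal, and monotonicity reduces to checking that the scalar ODE flow
\[
v(t)=\bigl[v_0^{\,1-p}-(p-1)(t-t_0)\bigr]^{1/(1-p)}
\]
has $\partial v(t)/\partial v_0\ge 0$ on the existence interval. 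Direct differentiation gives $\partial v/\partial v_0=(v/v_0)^p>0$ whenever $v_0>0$ and the solution has not yet blown up on the half-step, which is exactly the observation used in the analogous RKL semilinear corollary.

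The only substantive thing to be careful about, and the one I expect to be the main (minor) obstacle, is the domain of validity of the reaction half-step: the argument needs the iterates to stay positive and not blow up during the two outer substeps, so that $v^p$ remains smooth and the explicit formula applies. This is precisely the setting assumed throughout the paper for the semilinear heat equation, and under that hypothesis combining the three pointwise monotonicity estimates immediately yields (a) and (b). No new machinery beyond the RKG heat equation theorem and the explicit scalar ODE solution is required.
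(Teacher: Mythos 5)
Your argument is exactly the one the paper uses: it reduces the Strang-split semilinear step to the already-proved RKG heat-equation monotonicity combined with monotonicity of the scalar reaction flow, and then composes. If anything your computation $\partial v/\partial v_0=(v/v_0)^p$ is slightly more careful than the paper's terse ``clearly monotone in $u_0$ when $u_0$ is large,'' but the route and the ingredients are identical.
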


\section{Conclusions}
\label{sec:conclusions}

In this paper, the RKL and RKG super-time-stepping explicit methods are used to simulate both second-order and fourth-order spatial one-dimensional PDEs with finite-time singularities. Both methods excellently capture the asymptotic behavior and the scalings of the PDEs of many orders of magnitude, with greater computational efficiency than those of the widely used implicit methods. The numerical monotonicity of both methods without boundary condition is theoretically proved for the linear and semilinear heat equation.

One natural question is whether such methods can be used for all finite-time singularities, or are there restrictions.  
We conjecture that singularities with space-time scaling that does not correspond to the stability scaling of these schemes would require a different method.  One such example are the second-kind similarity solutions identified in fourth-order lubrication equations of the form $u_t + (u^nu_{xxx})_x=0$
for which the singularity has second-order parabolic scaling for the structure of the singularity when $n$ is small \cite{Symmetric}, however the fourth-order derivative requires a smaller timestep constraint for numerical stability using any forward time-stepping method.

For future work, the super-time-stepping explicit methods can be applied to higher dimensional stiff equations, where solving large systems of equations using implicit methods is computationally expensive; both time and memory costs should be largely reduced. Theoretically, it is still unknown how to prove that the RKG methods are monotone under the Dirichlet boundary condition, while the RKL methods are not. Moreover, monotonicity for variable-coefficient parabolic equations is another open question that needs a proof for theoretical guarantee. Such proofs might lead us to a better understanding of hypergeometric functions-based stabilized Runge-Kutta methods and motivate the design of new methods belonging to this class.

\appendix

\section{Adaptive Mesh Refinement}
\label{app:amr}
We use adaptive time-stepping and mesh refinement in our numerical simulations, for both the implicit and the explicit methods. We choose initial conditions that are symmetric and lead to a singularity at $x=0$; our numerical method can be easily modified and used in the same manner if the singularity is shifted or there are multiple singularity points (see e.g. \cite{BBDK}).  The initial mesh is a uniform grid. The mesh refinement criterion is determined by a second parameter. For the semilienar heat equation, we choose it to be the half-width of the numerical solution $x_{\text{half}}$, as defined in Section~\ref{semiheat_property}, which satisfies $u(x_{\text{half}}, t)=\frac{1}{2}\|u(\cdot, t)\|_{L^\infty}$; for the surface diffusion equation, the criterion is chosen to be the minimum value of the numerical simulation. 
\revision{In both cases the criteria is chosen to match the known scaling of the singularity. The half-width criteria is universal in that it always guarantees a certain number of grid points to resolve the inner spatial structure of the solution}. At each timestep, the half-width is calculated exactly using the existing grid point values with cubic spline interpolation. We first calculate $x_{\text{half}}$ for the initial condition and denote it by $x_{\text{half}}(t_0)$ by $t_0=0$. When time reaches $t_1>t_0$ such that $x_{\text{half}}(t_1)\le\frac{1}{2}x_{\text{half}}(t_0)$, we divide the middle half of the initial grid into another half. Then when time reaches $t_2>t_1$ such that $x_{\text{half}}(t_2)\le\frac{1}{2}x_{\text{half}}(t_1)$, we further divide the middle half of the first time refined meshes into half. Repeating this process, we obtain finer and finer grids near and symmetric about the origin. The refinement is done dynamically as the singularity evolvs. A similar procedure is done for the surface diffusion equation. Fig.~\ref{fig:meshrefinement} shows the mesh refinement for two times. Such an adaptive mesh refinement strategy is easy to implement; it also preserves the property that the distance between each two consecutive grids is in the form of $2^{-n}$ so that the roundoff error is eliminated for spatial grid discretization. The discrete time increment $dt$ and number of stages $s$ used in the super-time-stepping methods are also changed accordingly. For each time the spatial grid is refined, we divide $dt$ by 50 if $\|u(\cdot, t)\|_{L^\infty}<1000$, and divide $dt$ by 700 otherwise. The number of stages $s$ is decreased to ensure that the numerical stability is preserved, while we require $s\ge5$ to avoid the loss of precision caused by the excessively small $s$. For semilinear heat equation simulations, we start with $s=200$, and update $s$ by $s_{\text{new}}=\max(5, \lfloor0.45s\rfloor)$. For the axisymmetric surface diffusion equation, we start with $s=70$, and update $s$ by $s_{\text{new}}=s-5$ until $s=5$.

\section{Numerical Discretization}
\label{app:discretization}
The super-time-stepping method method is used to solve the diffusion equation using the method of lines discretization; more precisely, we solve the ODE $\frac{du}{dt}=\delta^2u$ where $\delta^2u$ is the central discretization of the second derivative of $u$ in space. Since we use adaptive mesh refinement in our method, the spatial grid is not uniform, and the corresponding central difference approximation to the 2-nd order spatial derivative is
\begin{equation}
    \Big(\frac{\partial^2 u}{\partial x^2}\Big)_i\approx\frac{u_{i+1}(x_i-x_{i-1})+u_{i-1}(x_{i+1}-x_i)-u_i(x_{i+1}-x_{i-1})}{\frac{1}{2}(x_{i+1}-x_{i-1})(x_{i+1}-x_{i})(x_{i}-x_{i-1})}.
\end{equation}
By elementary numerical analysis, the operator $\delta^2$ satisfies all the convergence requirement for $\mathbf{M}$ in the super-time-stepping method analysis. Then the non-linear ODE $u_t=u^p$ is solved using its analytic solution. Combining them using the Strang splitting, in each super timestep forward from $t_n$ to $t_n+dt$, we solve for
\begin{equation}
\begin{split}
    \frac{d\Tilde{u}}{dt}&=\Tilde{u}^p, \Tilde{u}(t_n)=u(t_n), t\in[t_n, t_n+\frac{1}{2}dt],\\
    \frac{du^*}{dt}&=\delta^2u^*, u^*(t_n)=\Tilde{u}(t_n+\frac{1}{2}dt), t\in[t_n, t_n+dt],\\
    \frac{du^\dagger}{dt}&=(u^\dagger)^p, u^\dagger(t_n+\frac{1}{2}dt)=\Tilde{u}(t_n+dt), t\in[t_n+\frac{1}{2}dt, t_n+dt],
\end{split}
\end{equation}
and finally we set $u(t_n+dt)=u^\dagger(t_n+dt)$. 

 In more detail, the right hand side is discretized as follows
\begin{equation}
\begin{aligned}
(\mathbf{M}r)_m^n&:=\frac{1}{2r_m^ndx}\Biggl[\frac{r_{m+1}^n}{\sqrt{1+(\delta r_{m+1}^n)^2}}\Biggl(\frac{\Tilde{\H}_{m+2}^n-\Tilde{\H}_m^n}{2dx}\Biggl)\\
&-\frac{r_{m-1}^n}{\sqrt{1+(\delta r_{m-1}^n)^2}}\Biggl(\frac{\Tilde{\H}_m^n-\Tilde{\H}_{m-2}^n}{2dx}\Biggl)\Biggl],\\
\end{aligned}
\end{equation}
where
\begin{equation}
\begin{aligned}
\Tilde{\H}_m^n&:=\frac{1}{r_m^n\sqrt{1+(\delta r_m^n)^2}}-\frac{\delta^2r_m^n}{(1+(\delta r_m^n)^2)^{3/2}}\\
\delta r_m^n&:=\frac{r_{m+1}^n-r_{m-1}^n}{2dx}\quad\delta^2 r_m^n:=\frac{r_{m+1}^n-2r_m^n+r_{m-1}^n}{dx^2}         
\end{aligned} 
\end{equation}
and we solve the ODE $\frac{dr}{dt}=\mathbf{M}r$ using the super-time-stepping method to get the approximated solution.

\section{Numerical Monotonicity Example}
\label{app:monotone_example}
To understand Definition~\ref{def:monotonicity} more clearly, consider solving the heat equation $u_t=u_{xx}$ using Euler's method; the numerical update is
$$u^{t+dt}_i=u^{t}_i+\frac{dt}{dx^2}(u^{t}_{i+1}-2u^{t}_i+u^{t}_{i-1}),$$
where $dt$ represents the timestep and $dx$ represents the spatial grid size. Then in this case, $j$ enumerates from $i-1$ to $i+1$ for a fixed grid index $i$; more explicitly,
$$\mathcal{H}(u_{i+1}^t, u_{i}^t, u_{i-1}^t)=u^{t}_i+\frac{dt}{dx^2}(u^{t}_{i+1}-2u^{t}_i+u^{t}_{i-1}).$$
Euler's method in this case is monotone, by applying the definition
$$\frac{\partial\mathcal{H}}{\partial u_{i+1}^t}=\frac{dt}{dx^2}\ge0;\quad\frac{\partial\mathcal{H}}{\partial u_{i-1}^t}=\frac{dt}{dx^2}\ge0;\quad\frac{\partial\mathcal{H}}{\partial u_i^t}=1-2\frac{dt}{dx^2}\ge0,$$
under the CFL constraint $\frac{dt}{dx^2}\le\frac{1}{2}$.

Throughout our proof, we focus on the problem~\ref{eqn:theoryheat}. In that case, for a $s$-stage stabilized Runge--Kutta method, $j$ enumerates from $j-s$ to $j+s$.

\section{Proof of the Alternating Summation Estimation Lemma~\ref{lemma:alterbinom}}
\label{app:sum_lemma}
The main idea is to write the finite sum as a generalized hypergeometric function in the form of the left hand side of Theorem~\ref{thm:clausen} up extracting a suitable positive $n, j$-dependent constant, so that the result immediately follows. The restriction on $x$ serves exactly as a condition that the series in Theorem~\ref{thm:clausen} are well-defined.
\begin{proof}[Proof of Lemma~\ref{lemma:alterbinom}]
By direct computation,
\begin{align*}
&\frac{j!j!(s-j)!}{(s+j)!}\sum_{k=j}^s\binom{s}{k}\binom{s+k}{k}x^k\binom{2k}{j+k}(-1)^{j+k}\\
=&\frac{j!j!(s-j)!}{(s+j)!}\sum_{k=0}^{s-j}\binom{s}{j+k}\binom{s+j+k}{j+k}(-x)^k\binom{2k+2j}{k+2j}\\
=&\frac{j!j!(s-j)!}{(s+j)!}\sum_{k=0}^{s-j}(-x)^k\frac{1}{(j+k)!(s-j-k)!}\frac{(s+j+k)!}{(j+k)!}\frac{(2k+2j)!}{(k+2j)!k!}\\
=&(s-j)\cdots(s-j-k+1)\sum_{k=0}^{s-j}(-x)^k\frac{(s+j+k)\cdots(s+j+1)}{(j+1)^2\cdots(j+k)^2}\frac{(2k+2j)\cdots(k+2j+1)}{k!}\\
=&\sum_{k=0}^{s-j}\frac{x^k}{k!}\frac{(s+j+k)\cdots(s+j+1)(j-n)\cdots(j+k-s-1)(2k+2j)\cdots(k+2j+1)}{(j+1)^2\cdots(j+k)^2}\\
=&\sum_{k=0}^{\infty}\frac{x^k}{k!}\frac{(j+s+1)_k(j-s)_k(2k+2j)\cdots(k+2j+1)}{(j+1)_k(j+1)\cdots(j+k)}.
\end{align*}
Now we show that for any $k\ge0$,
$$\frac{(2k+2j)\cdots(k+2j+1)}{(j+1)\cdots(j+k)}=4^k\frac{\Gamma(j+k+\frac{1}{2})}{\Gamma(j+\frac{1}{2})(2j+1)\cdots(2j+k)}.$$
We proceed using induction. Clearly the identity holds for $k=0, 1$. Assume that the above identity holds for some $k\in\N$; then by the inductive hypothesis and the induction relationship of the gamma function,
\begin{align*}
&4^{k+1}\frac{\Gamma(j+k+\frac{3}{2})}{\Gamma(j+\frac{1}{2})(2j+1)\cdots(2j+k+1)}\\
=&4^k\frac{\Gamma(j+k+\frac{1}{2})}{\Gamma(j+\frac{1}{2})(2j+1)\cdots(2j+k)}\frac{4(j+k+\frac{1}{2})}{(2j+k+1)}\\
=&\frac{(2k+2j)\cdots(k+2j+1)}{(j+1)\cdots(j+k)}\frac{2(2j+2k+1)}{2j+k+1}\\
=&\frac{2(k+j+1)(2k+2j+1)(2k+2j)\cdots(k+2j+2)}{(j+1)\cdots(j+k)(k+j+1)}\\
=&\frac{(2k+2j+2)\cdots(k+2j+2)}{(j+1)\cdots(j+k+1)}
\end{align*}
as desired. Therefore, by induction, the above identity is proved. Plugging back into the result of the beginning computation in this lemma, using definition~\ref{eqn:hypergeo_def},
$$\sum_{k=j}^s\binom{s}{k}\binom{s+k}{k}x^k\binom{2k}{j+k}(-1)^{j+k}=\frac{(s+j)!}{j!j!(s-j)!}\pFq{3}{2}{j+\frac{1}{2}, j+s+1, j-s}{j+1, 2j+1}{4x}.$$
Since $4x<1$, using Theorem~\ref{thm:clausen} with $a=\frac{j+s+1}{2}, b=\frac{j-s}{2}$, we know that
$$\pFq{3}{2}{j+\frac{1}{2}, j+s+1, j-s}{j+1, 2j+1}{4x}=\Big(\pFq{2}{1}{\frac{j+s+1}{2}, \frac{j-s}{2}}{j+1}{4x}\Big)^2\ge0$$
which proves the desired result for $x<\frac{1}{4}$.

By continuity of polynomials, taking $4x\to1$ in equation~\ref{eqn:combformula_rkl1} gives us the desired inequality when $x=\frac{1}{4}$. This concludes our proof.
\end{proof}

\begin{remark}
Notice that in the final step of the above lemma, Clausen's formula~\ref{thm:clausen} cannot be directly applied when $s-j$ is an odd number and at the same time $x\ge\frac{1}{4}$, since in that case $\frac{j-s}{2}\notin\Z$ and $\frac{j-s}{2}<0$, so that $\pFq{2}{1}{\frac{j+s+1}{2}, \frac{j-s}{2}}{j+1}{4x}$ is a \textit{series} that is not well-defined instead of a polynomial. This shows the necessity of the CFL condition in proving the numerical monotonicity.
\end{remark}

\section*{Acknowledgments}

The first author would like to thank for T. Amdeberhan and Professor Hjalmar Rosengren for providing an important observation in proving the RKL monotonicity combinatorial inequality~\cite{484999}.

\bibliographystyle{siamplain}
\bibliography{references}

\clearpage

\end{document}